\documentclass[12pt]{article}
\usepackage{amsthm}
\usepackage{amssymb}
\usepackage{mathtools,xparse}
\usepackage{latexsym}
\usepackage{enumerate}
\usepackage[title]{appendix}
\newtheorem{theorem}{Theorem}[section]
\newtheorem{proposition}[theorem]{Proposition}

\newtheorem{lemma}[theorem]{Lemma}

\theoremstyle{definition}
\newtheorem{definition}[theorem]{Definition} 
\newtheorem{remark}[theorem]{Remark}

\title{Rigidity for the isoperimetric inequality of negative effective dimension on weighted Riemannian manifolds}
\date{\today}
\author{MAI, Cong Hung\footnote{Department of Mathematics, Kyoto University, Kyoto 606-8502, Japan (hongmai@math.kyoto-u.ac.jp)}}

\begin{document}
	
	\maketitle
	
	\begin{abstract}
	We study, on a weighted Riemannian manifold of Ric$_{N} \geq K > 0$ for $N < -1$, when equality holds in the isoperimetric inequality. Our main theorem asserts that such a manifold is necessarily isometric to the warped product $\mathbb{R} \times_{\cosh(\sqrt{K/(1-N)}t)} \Sigma^{n-1}$ of hyperbolic nature, where $\Sigma^{n-1}$ is an $(n-1)$-dimensional manifold with lower weighted Ricci curvature bound and $\mathbb{R}$ is equipped with a hyperbolic cosine measure. This is a similar phenomenon to the equality condition of Poincar\'e inequality. Moreover, every isoperimetric minimizer set is isometric to a half-space in an appropriate sense. 
	\end{abstract}
	
	
	\section{Introduction}
	
	The isoperimetric inequality is a classical topic in comparison geometry with the history tracing back to the ancient Greece. Most of the work on isoperimetric problem has been done in Euclidean spaces and Riemannian manifolds. Recently, the isoperimetric problem can be formulated in greater generality in weighted manifolds, meaning Riemannian manifolds equipped with arbitrary (smooth, positive) measures (See \cite{Mor}).
	
	In a weighted manifold, the Ricci curvature is modified into the \emph{weighted Ricci curvature} $\mathrm{Ric}_N$ involving a parameter $N$ which is sometimes called the effective dimension. Recently, the developments on the \emph{curvature-dimension condition} in the sense of Lott, Sturm and Villani have shed new light on the theory of curvature bounds for weighted manifolds. The curvature-dimension condition $\mathrm{CD}(K,N)$ is a synthetic notion of lower Ricci curvature bounds for metric measure spaces. The parameters $K$ and $N$ are usually regarded as ``a lower bound of the Ricci curvature'' and ``an upper bound of the dimension'', respectively. The roles of $K$ and $N$ are better understood when we consider a weighted Riemannian manifold $(M,g,m)$: $\mathrm{CD}(K,N)$ is equivalent to $\mathrm{Ric}_N \ge K$.
	
	Geometric analysis for the weighted Ricci curvature $\mathrm{Ric}_N$
	(also called the \emph{Bakry--\'Emery--Ricci curvature}) including the isoperimetric inequality
	has been intensively studied by Bakry and his collaborators
	in the framework of the \emph{$\Gamma$-calculus} (see \cite{BGL} and \cite{BL}). Recently it turned out that there is a rich theory also for $N \in (-\infty,1]$, though this range seems strange due to the above interpretation of $N$ as an upper dimension bound. For examples, various Poincar\'e-type inequalities (\cite{KM}), the curvature-dimension condition (\cite{Oneg,Oneedle}), the splitting theorem (\cite{WY}) were studied for $N<0$ or $N \le 1$. In our previous paper \cite{Mai}, we studied the rigidity of the Poincar\'e inequality (spectral gap) under the condition Ric$_{N} \geq K > 0$ with $N < -1$, and showed that the sharp spectral gap is achieved only if the space is isometric to the warped product $\mathbb{R} \times_{\cosh(\sqrt{K/(1-N)}t)} \Sigma^{n-1}$ of hyperbolic nature. In this paper, we continue this study to the rigidity problem of the isoperimetric inequality.
	
	The isoperimetric inequality on weighted manifolds satisfying Ric$_{N} \geq K$ and diam$(M) \leq D$ was studied in \cite{Mil1} and \cite{Mil2}. The isoperimetric inequality could also be verified in a gentle way called the \emph{needle decomposition} on Riemannian manifolds developed by Klartag in \cite{Kl}. The idea is to reduce a high dimensional inequality into its one dimensional version on geodesics, which is much easier to verify. Cavalletti and Mondino generalized this method to metric measure spaces satisfying CD$(K,N)$ condition for $N \in (1,\infty)$, and also established the rigidity result for the isoprimetic inequality in \cite{CM}. In this paper, we use the needle decomposition method to consider the rigidity of the isoperimetric inequality under the condition $\mathrm{Ric}_{N} \geq K > 0$ and $N < -1$. The splitting phenomenon in the result implies that the manifold is necessarily isometric to the warped product $\mathbb{R} \times_{\cosh(\sqrt{K/(1-N)}t)} \Sigma^{n-1}$, where $\Sigma^{n-1}$ is an $(n-1)$-dimensional manifold with Ric$_{N-1} \ge K(2-N)/(1-N)$ and $\mathbb{R}$ is equipped with a hyperbolic cosine measure (see Theorem \ref{th:ndimneg}). This is directly related to the lower bound problem of the first nonzero eigenvalue in \cite{Mai} (see Theorem \ref{th:mai}), because the hyperbolic sine of the Lipschitz function used to construct the needle decomposition (called a guiding function in \cite{Kl}) turns out an eigenfunction associated with the smallest eigenvalue of the Laplacian. 
	
	The organization of this article is as follows: In Section 2 we give a brief introduction about weighted Riemannian manifolds, including the weighted Ricci curvature. We also review the basics of the isoperimetric inequality as well as the needle decomposition method. In Section 3 we consider the rigidity problem for the Bakry--Ledoux isoperimetric inequality under the condition $\mathrm{Ric}_\infty \geq K$. In this case we have the isoperimetric splitting with the Gaussian space as shown in \cite{Mor}. We give an alternative proof based on Klartag's needle decomposition, which will be helpful to understand the case of negative effective dimension. Section 4 contains the proof of our main theorem on the rigidity of isoperimetric inequality of negative effective dimension on weighted Riemannian manifolds. The case of negative effective dimension requires some additional technical arguments since the rigidity does not turn out an isometric splitting as in the case $N = \infty$, and the expected eigenfunction is not just a guiding function but the hyperbolic sine of it.
	\medskip
	
	\textit{Acknowledgements}.
	I would like to thank my supervisor, Professor Shin-ichi Ohta,
	for the kind guidance, encouragement and advice he has provided
	throughout my time working on this paper. I also would like to thank Professor Frank Morgan and Professor Emanuel Milman for giving valuable comments on the reference part
	of the first draft of this paper.
	
	\section{Preliminaries}
	
	\subsection{Weighted Riemannian manifolds}
	
	A weighted Riemannian manifold $(M,g,m)$ will be a pair of
	a complete, connected, boundaryless manifold $M$
	equipped with a Riemannian metric $g$ and a measure $m = e^{-\psi}\mathrm{vol}_{g}$,
	where $\psi \in C^{\infty}(M)$ and $\mathrm{vol}_{g}$ is the standard volume measure on $(M,g)$.
	On $(M,g,m)$, we define the weighted Ricci curvature as follows:
	
	\begin{definition}[Weighted Ricci curvature]\label{df:wRic}
		Given a unit vector $v \in U_{x}M$ and $N \in (-\infty,0) \cup [n,\infty]$,
		the \emph{weighted Ricci curvature} $\mathrm{Ric}_N(v)$ is defined by
		\begin{enumerate}[(1)]
			\item $\mathrm{Ric}_{N}(v) :=\mathrm{Ric}_g(v) +\mathrm{Hess}\,\psi(v,v)
			-\displaystyle\frac{\langle \nabla \psi(x),v\rangle^2}{N-n}$ for $N \in (-\infty,0) \cup (n,\infty)$;
			
			\item $\mathrm{Ric}_{n}(v) :=\mathrm{Ric}_g(v) +\mathrm{Hess}\,\psi(v,v)$
			if $\langle \nabla\psi(x),v\rangle = 0$, and $\mathrm{Ric}_n(v):=-\infty$ otherwise;				
			
			\item $\mathrm{Ric}_{\infty}(v) :=\mathrm{Ric}_g(v) +\mathrm{Hess}\,\psi(v,v)$,
		\end{enumerate}
		where $n=\dim M$ and $\mathrm{Ric}_g$ denotes the Ricci curvature of $(M,g)$.
		The parameter $N$ is sometimes called the \emph{effective dimension}.
		We also define $\mathrm{Ric}_{N}(cv) :=c^{2}\mathrm{Ric}_{N}(v)$ for $c \ge 0$.
	\end{definition}
	
	Note that if $\psi$ is constant then the weighted Ricci curvature coincides with
	$\mathrm{Ric}_g(v)$ for all $N$.
	When $\mathrm{Ric}_{N}(v) \geq K$ holds for some $K\in \mathbb{R}$ and all unit vectors $v \in TM$,
	we will write $\mathrm{Ric}_{N}\geq K$.
	By definition,
	\[ \mathrm{Ric}_n(v) \le \mathrm{Ric}_N(v) \le \mathrm{Ric}_{\infty}(v) \le \mathrm{Ric}_{N'}(v) \]
	holds for $n \le N<\infty$ and $-\infty<N'<0$,
	and $\mathrm{Ric}_N(v)$ is non-decreasing in $N$ in the ranges $(-\infty,0)$ and $[n,\infty]$.
	
	Note that the curvature-dimension condition $\mathrm{CD}(K,N)$ in the sense of Lott--Sturm--Villani is equivalent to $\mathrm{Ric}_N \ge K$
	(see \cite{vRS, StI,StII,LV} and \cite{Oint} as well for the Finsler analogue).
	
	We also define the weighted Laplacian with respect to $m$.
	
	\begin{definition}[Weighted Laplacian]\label{df:Lap}
		The \emph{weighted Laplacian} (also called the \emph{Witten Laplacian})
		of $u \in C^\infty(M)$ is defined as follows:
		\[ \Delta_{m}u := \Delta u - \langle \nabla u,\nabla\psi \rangle. \]
	\end{definition}
	
	Notice that the Green formula (the integration by parts formula)
	\[ \int _{M} u\Delta_{m}v \,dm =-\int _{M}\langle \nabla u,\nabla v\rangle \,dm =\int _{M} v\Delta_{m} u\,dm \]
	holds provided $u$ or $v$ belongs to $C_c^{\infty}(M)$
	(smooth functions with compact supports) or $H^1_0(M)$.
	
	An important result on weighted manifolds satisfying $\mathrm{Ric}_N \geq K > 0$ is the lower bound of the first nonzero eigenvalue of the weighted Laplacian (equivalently, Poincar\'e inequality) $\lambda_{1} \geq KN/(N-1)$. The case of equality was studied in \cite[Theorem 2]{CZ} ($N = \infty$) and \cite[Theorem 4.5]{Mai} ($N < -1$) as a Ric$_{N}$-counterpart to the classical Obata theorem in \cite{Ob}:
	 
	 \begin{theorem}(Cheng-Zhou)\label{th:poincare}
	{
	Let $(M,g,m)$ be a weighted Riemannian manifold satisfying $\emph{Ric}_{\infty} \geq K$ for $K > 0$. 
	
	The equality $\lambda_{1} = K$ with an eigenfunction $u$ associated with $\lambda_{1}$ implies that $M$ is isometric to the product space $\Sigma^{n-1} \times \mathbb{R}$ as weighted Riemannian manifolds, where $\Sigma^{n-1}  = u^{-1}(0)$ is an $(n-1)$-dimensional manifold with $\mathrm{Ric}_{\infty} \ge K$ and $\mathbb{R}$ is equipped with the Gaussian measure $e^{-Kt^2/2}dt$. Moreover $u(x,t)$ (as the function on the product space) is constant on $\Sigma^{n-1} \times \{t\}$, can be chosen as $u(x,t) = t$.
	}
	\end{theorem}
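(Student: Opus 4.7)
The plan is to run a standard Bochner-type (Bakry--\'Emery) argument on an eigenfunction $u$ associated with $\lambda_1 = K$, extract the rigid identity $\mathrm{Hess}\, u \equiv 0$, and then exploit the resulting parallel gradient vector field to produce the isometric splitting and to read off the potential $\psi$.

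First I would write down the weighted Bochner formula for $u \in C^{\infty}(M)$,
\[
\tfrac{1}{2}\Delta_m |\nabla u|^{2} = |\mathrm{Hess}\, u|^{2} + \langle \nabla \Delta_m u, \nabla u\rangle + \mathrm{Ric}_{\infty}(\nabla u, \nabla u),
\]
and substitute $\Delta_m u = -Ku$ to get
\[
\tfrac{1}{2}\Delta_m |\nabla u|^{2} = |\mathrm{Hess}\, u|^{2} - K|\nabla u|^{2} + \mathrm{Ric}_{\infty}(\nabla u, \nabla u).
\]
Integrating against $m$ and using the Green formula (plus the Poincar\'e-type identity $\int_M |\nabla u|^2\,dm = K\int_M u^2\,dm$, and the concentration of $m$ implied by $\mathrm{Ric}_{\infty}\ge K>0$ to justify the vanishing of boundary terms via a truncation or heat-semigroup approximation), one obtains
\[
0 = \int_M |\mathrm{Hess}\, u|^{2}\,dm + \int_M \bigl(\mathrm{Ric}_{\infty}(\nabla u,\nabla u) - K|\nabla u|^{2}\bigr)\,dm.
\]
Because both integrands are nonnegative by hypothesis, each must vanish pointwise, so $\mathrm{Hess}\, u \equiv 0$ and $\mathrm{Ric}_{\infty}(\nabla u,\nabla u) = K|\nabla u|^{2}$ wherever $\nabla u \neq 0$.

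Next, from $\mathrm{Hess}\, u \equiv 0$ I conclude that $\nabla u$ is a parallel (hence nowhere-vanishing, since $u$ is a nonconstant eigenfunction) vector field on $M$, and that $|\nabla u|$ is a constant $c > 0$; after rescaling assume $c=1$. A de Rham--type splitting, based on flowing by the unit parallel field $\nabla u$ and restricting to the level set $\Sigma := u^{-1}(0)$, shows that $M$ is isometric as a Riemannian manifold to $\Sigma \times \mathbb{R}$ with $u(x,t) = t$ and $\nabla u = \partial_t$. To identify the measure, I compute $\Delta_m u = \Delta u - \langle \nabla u, \nabla \psi\rangle = -\partial_t \psi$ in these coordinates; comparing with $\Delta_m u = -Kt$ gives $\partial_t \psi = Kt$, whence $\psi(x,t) = \tfrac{K}{2}t^{2} + \psi_1(x)$. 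Therefore $m$ splits as the product of $e^{-Kt^{2}/2}\,dt$ on $\mathbb{R}$ and $e^{-\psi_1}\mathrm{vol}_{\Sigma}$ on $\Sigma$.

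Finally, for the curvature bound on $\Sigma$, the product structure makes all mixed Ricci components vanish, so for $v \in T\Sigma$ one has $\mathrm{Ric}_g(v) = \mathrm{Ric}_{\Sigma}(v)$ and $\mathrm{Hess}\,\psi(v,v) = \mathrm{Hess}_{\Sigma}\psi_1(v,v)$, giving $\mathrm{Ric}^{\Sigma}_{\infty}(v) = \mathrm{Ric}^{M}_{\infty}(v) \ge K$. The step I expect to be most delicate is the justification of the integration by parts that kills $\int_M \Delta_m |\nabla u|^{2}\,dm$ on a possibly noncompact $M$; this is where the positivity $K>0$ is essential, since the resulting Gaussian-type tail of $m$, together with the a priori bounds on $u$ and $|\nabla u|$ coming from the CD$(K,\infty)$ estimates (Gaussian moment bounds for eigenfunctions), provides the integrability needed to cut off carefully and pass to the limit.
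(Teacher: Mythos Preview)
The paper does not actually prove this theorem: it is quoted as a preliminary result from Cheng--Zhou \cite{CZ} (and its $N<-1$ analogue from \cite{Mai}) and is used later as a black box in the proofs of Theorems~\ref{th:ndim} and~\ref{th:ndimneg}. So there is no ``paper's own proof'' to compare against.

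That said, your outline is the standard and correct route, and it is essentially the argument in \cite{CZ}: integrate the weighted Bochner identity for an $L^2$-eigenfunction with $\Delta_m u=-Ku$, use $\mathrm{Ric}_\infty\ge K$ to force $\mathrm{Hess}\,u\equiv 0$, and then split $M$ along the parallel unit field $\nabla u$, reading off $\psi(x,t)=\tfrac{K}{2}t^2+\psi_1(x)$ from $\Delta_m u=-\partial_t\psi$. You have correctly flagged the one genuinely nontrivial point: justifying $\int_M \Delta_m|\nabla u|^2\,dm=0$ on the (necessarily noncompact) manifold. In \cite{CZ} this is handled by a cutoff argument together with integrability of $u$, $|\nabla u|$ and $|\mathrm{Hess}\,u|$ in $L^2(m)$, the last following once one knows $\Delta_m u\in L^2(m)$ and uses the Bochner inequality itself; your sketch of obtaining the needed tail bounds from the $K>0$ concentration is along the right lines. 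One small addendum worth making explicit in the splitting step: completeness of $M$ and $|\nabla u|\equiv 1$ ensure the gradient flow exists for all time, and $\mathrm{Hess}\,u\equiv 0$ makes each level set $u^{-1}(c)$ totally geodesic, which is what gives the honest Riemannian product $\Sigma\times\mathbb{R}$ rather than merely a diffeomorphism.
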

	
	\begin{theorem} (Mai) \label{th:mai}
		Let $(M,g,m)$ be a complete weighted Riemannian manifold
		satisfying $\mathrm{Ric}_{N} \ge K$ for some $N<-1$ and $K>0$,
		and $m(M)<\infty$.
		The equality $\lambda_{1} = KN/(N-1)$ with an eigenfunction $u$ associated with $\lambda_{1}$ implies that $M$ is isometric to the warped product
			\[ \mathbb{R} \times_{\cosh(\sqrt{K/(1-N)}t)} \Sigma
			=\bigg( \mathbb{R} \times \Sigma,
			 dt^2 +\cosh^2\bigg( \sqrt{\frac{K}{1-N}}t \bigg) \cdot g_{\Sigma} \bigg) \]
			and the measure $m$ is written through the isometry as
			\[ m(dtdx) =\cosh^{N-1}\bigg( \sqrt{\frac{K}{1-N}}t \bigg) \,dt \,m_{\Sigma}(dx), \]
			where $\Sigma^{n-1}  = u^{-1}(0)$ is an $(n-1)$-dimensional weighted Riemannian manifold
			satisfying $\mathrm{Ric}_{N-1} \ge K(2-N)/(1-N)$. 
		\end{theorem}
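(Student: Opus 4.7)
The natural route is a Bochner argument with a careful analysis of the equality case, followed by a Riccati-type integration that produces the warped product. The plan is to work with the weighted $N$-Bochner inequality
\[
\tfrac{1}{2}\Delta_m|\nabla u|^2 \;\ge\; \frac{(\Delta_m u)^2}{N} + \langle\nabla u,\nabla \Delta_m u\rangle + \mathrm{Ric}_N(\nabla u),
\]
which for $N<0$ still holds because the algebraic step reduces to the square $(Nb+(N-n)c)^2\ge 0$ after multiplying by $nN(N-n)>0$. Integrating against $m$ and substituting $\Delta_m u = -\lambda_1 u$ with $\lambda_1 = KN/(N-1)$ makes the inequality an equality in the integrated sense, hence pointwise almost everywhere and, by smoothness of $u$, everywhere on $M$.

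Pointwise equality has two consequences. First, Cauchy--Schwarz equality for $|\mathrm{Hess}\,u|^2$ forces $\mathrm{Hess}\,u = (\Delta u/n)\,g$. Second, the algebraic equality $N\langle\nabla\psi,\nabla u\rangle + (N-n)\Delta_m u = 0$ relates the weight term to $\Delta_m u$ and yields $\Delta u = (n/N)\Delta_m u$. Together these give the crucial identity
\[
\mathrm{Hess}\,u \;=\; \frac{\Delta_m u}{N}\,g \;=\; \frac{K}{1-N}\,u\,g \;=\; \kappa^2 u\,g,\qquad \kappa := \sqrt{K/(1-N)}.
\]
Differentiating $|\nabla u|^2/2$ and using this Hessian relation shows that $|\nabla u|^2 - \kappa^2 u^2$ is constant on $M$; after normalizing $u$ one may take this constant to be $1$, so $|\nabla u|^2 = 1+\kappa^2 u^2$. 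In particular $\nabla u$ never vanishes except possibly where $u=0$, and the zero set $\Sigma := u^{-1}(0)$ is a smooth embedded hypersurface.

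Next I would change variable by $u(x) = \kappa^{-1}\sinh(\kappa t(x))$. This $t$ is smooth with $|\nabla t|\equiv 1$, so integral curves of $\nabla t$ are unit speed geodesics and $t$ is a signed distance function from $\Sigma$. Flowing $\Sigma$ by $\nabla t$ provides a diffeomorphism $\mathbb{R}\times \Sigma \to M$ (here one uses completeness to avoid the flow ceasing in finite time). To extract the warping, apply the Hessian identity to vectors $X,Y$ tangent to the level $\{t=\mathrm{const}\}$: since $u$ depends only on $t$, $\mathrm{Hess}\,u(X,Y)$ reduces to $\cosh(\kappa t)$ times the second fundamental form of the level, giving shape operator $\kappa\tanh(\kappa t)\cdot \mathrm{Id}$. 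Solving $f'/f = \kappa\tanh(\kappa t)$ with $f(0)=1$ yields $f(t)=\cosh(\kappa t)$, so $g = dt^2 + \cosh^2(\kappa t)\,g_\Sigma$. The weight is recovered from $\partial_t\psi = \langle\nabla\psi,\nabla t\rangle = (n-N)\kappa\tanh(\kappa t)$ (using the equality relation between $\langle\nabla\psi,\nabla u\rangle$ and $\Delta_m u$), hence $\psi(t,x) = (n-N)\log\cosh(\kappa t)+\psi_\Sigma(x)$, and combining with the warped volume element gives the stated form $m(dt\,dx) = \cosh^{N-1}(\kappa t)\,dt\,m_\Sigma(dx)$.

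The steps I expect to be most delicate are the global splitting (making sure the normal exponential flow from $\Sigma$ is globally defined and surjective, which requires $u$ to be unbounded on $M$ and uses $m(M)<\infty$ together with the shape of the warping) and the transfer of the curvature bound to $\Sigma$. For the latter, I would compute $\mathrm{Ric}_N$ of the warped product $\mathbb{R}\times_{\cosh(\kappa t)}\Sigma$ equipped with the measure $\cosh^{N-1}(\kappa t)\,dt\,m_\Sigma$ in tangential directions; the tangential part of $\mathrm{Ric}_N$ decomposes as a warping contribution plus the weighted Ricci curvature of $\Sigma$ with shifted effective dimension $N-1$, and forcing $\mathrm{Ric}_N\ge K$ in these directions yields exactly $\mathrm{Ric}_{N-1}^\Sigma \ge K(2-N)/(1-N)$. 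The rest of the verification (that the model space saturates $\lambda_1 = KN/(N-1)$ with eigenfunction $u = \kappa^{-1}\sinh(\kappa t)$) is a direct one-dimensional computation.
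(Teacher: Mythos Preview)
The paper does not actually prove Theorem~\ref{th:mai}; it is quoted in the preliminaries from the author's earlier work \cite{Mai} and then used as a black box in the proof of Theorem~\ref{th:ndimneg}. So there is no ``paper's own proof'' to compare against here.

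That said, your proposal is the standard Obata-type route and is essentially what one finds in \cite{Mai}. The $N$-Bochner inequality does hold for $N<0$ (your algebraic remark is correct: after clearing denominators by $nN(N-n)>0$ one is left with $((N-n)\Delta u + n\langle\nabla\psi,\nabla u\rangle)^2\ge 0$), and integrated equality with $\lambda_1=KN/(N-1)$ forces $\mathrm{Hess}\,u = \kappa^2 u\,g$ exactly as you say. The substitution $u=\kappa^{-1}\sinh(\kappa t)$, the Riccati integration $f'/f=\kappa\tanh(\kappa t)$ giving the $\cosh(\kappa t)$ warping, the weight computation via $\partial_t\psi=(n-N)\kappa\tanh(\kappa t)$, and the resulting measure $\cosh^{N-1}(\kappa t)\,dt\,m_\Sigma$ are all correct. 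The points you flag as delicate---global surjectivity of the normal flow from $\Sigma$ and the curvature transfer $\mathrm{Ric}_{N-1}^\Sigma\ge K(2-N)/(1-N)$---are indeed the places where \cite{Mai} spends the most care, but your indicated arguments are the right ones.
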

		
	We remark that the inequality $\lambda \geq KN/(N-1)$ is known to be sharp only for $N \leq -1$, and when $N = -1$ the lower bound is never achieved. It was shown in \cite{KM} that the constant $KN/(N-1)$ is not sharp at least for $N < 0$ close to $0$.
	
	\subsection{Isoperimetric inequalities}
	
	Another important result on weighted Riemannian manifolds with lower Ricci curvature bound is the isoperimetric inequality. To state the isoperimetric inequality, we define \emph{Minkowski's exterior boundary} as follows:

	\begin{equation}{
	m^{+}(A) := \liminf_{\epsilon \downarrow 0} \frac{m(A^{\epsilon})-m(A)}{\epsilon}
	}
    \end{equation}
	for a Borel set $A$, where $A^{\epsilon}$ denotes the $\epsilon$-neighborhood of $A$. Assuming $m(M) < \infty$ (this is the case when Ric$_{\infty} \geq K > 0$), we normalize $m$ as $m(M) = 1$ since such a normalization does not change $\textrm{Ric}_{N}$.
	The \emph{isoperimetric profile} is defined as follows:
	 \[ I_{(M,g,m)}(\theta) := \inf \{m^{+}(A)|A\subset M, \textrm{Borel set with } m(A) = \theta\} \textrm{ for } \theta \in (0,1). \]
	Isoperimetric inequalities under $\mathrm{Ric}_{N} \geq K > 0$ were shown by Le\'vy--Gromov \cite{Gr1,Gr2} ($N = n$), Bayle \cite{Ba} ($N \in (n,\infty$)), Bakry--Ledoux \cite{BL} ($N = \infty$) and Milman \cite{Mil2} ($N < 0$). In fact, Milman \cite{Mil1,Mil2} intensively studied the setting of $\mathrm{Ric}_{N} \geq K$ and $\mathrm{diam}(M) \leq D$. 
	 
	 \begin{theorem} (Isoperimetric inequality) \label{th:iso}
	{
	Let $(M,g,m)$ be a weighted Riemannian manifold satisfying $m(M) = 1$, $\mathrm{diam}(M) \leq D$ with $D\in (0,\infty]$ and $\emph{Ric}_{N} \geq K$ for $N \in (-\infty,0)\cup[n,\infty]$. Then for all $\theta\in(0,1)$, we have 
	$I_{(M,g,m)} (\theta) \geq I_{(K,N,D)}(\theta)$, where $I_{(K,N,D)}$ depends only on $K,N$ and $D$.
	}
	\end{theorem}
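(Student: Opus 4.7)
The plan is to apply the needle decomposition (localization) method, which reduces this high-dimensional isoperimetric inequality to a family of one-dimensional problems along geodesics. First, for a Borel set $A \subset M$ with $m(A) = \theta$, I would use $L^1$-optimal transport (Kantorovich duality) between the normalized restrictions $m|_A/\theta$ and $m|_{M \setminus A}/(1-\theta)$ to produce a $1$-Lipschitz \emph{guiding function} $u: M \to \mathbb{R}$. Klartag's needle decomposition, extended to $\mathrm{CD}(K,N)$ metric measure spaces by Cavalletti--Mondino in the range $N \in [n,\infty]$ and to the negative range $N<0$ by the works cited in the introduction, then disintegrates $m$ into one-dimensional "needles" $\{(X_\alpha,m_\alpha)\}_{\alpha \in Q}$, where each $X_\alpha$ is a transport geodesic of length at most $D$ and each $m_\alpha$ has a density on an interval satisfying the one-dimensional $\mathrm{CD}(K,N)$ condition.

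The guiding function is chosen so that each needle is mass-bisected by $A$ with ratio $\theta:(1-\theta)$, and the Minkowski boundary measure satisfies the disintegration inequality
\[ m^+(A) \geq \int_Q m_\alpha^+(A \cap X_\alpha) \, d\mathfrak{q}(\alpha). \]
Next I would invoke the one-dimensional classification: among all probability measures on an interval of length at most $D$ whose density satisfies the one-dimensional $\mathrm{CD}(K,N)$ condition, there is a sharp lower bound on the boundary measure of any subset of relative mass $\theta$. Calling this bound $I_{(K,N,D)}(\theta)$---an object depending only on $K,N,D$---the needle inequality $m_\alpha^+(A \cap X_\alpha) \geq I_{(K,N,D)}(\theta) \cdot m_\alpha(X_\alpha)$ holds for $\mathfrak{q}$-a.e.\ $\alpha$, and integrating over $Q$ yields $m^+(A) \geq I_{(K,N,D)}(\theta)$, as desired.

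The main obstacle is twofold. First, establishing the needle decomposition in the negative effective dimension regime $N<0$ requires a delicate extension of the $\mathrm{CD}$ machinery, because the distortion coefficients and the convexity of the associated entropies flip sign and standard arguments must be carefully reworked; this is precisely the content of the Ohta and Cavalletti--Mondino extensions referred to in the introduction. Second, identifying or at least controlling the one-dimensional model profile $I_{(K,N,D)}$ requires a Borell-type rearrangement within the class of admissible one-dimensional $\mathrm{CD}(K,N)$ densities together with a variational analysis of the extremal density; in the $N<0$ setting the extremizer involves a hyperbolic cosine density, which foreshadows the warped-product structure appearing in Theorem~\ref{th:mai} and in the rigidity statement of the present paper.
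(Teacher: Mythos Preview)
Your proposal is correct and follows essentially the same approach as the paper. The paper treats Theorem~\ref{th:iso} as a known result (citing Milman) but then sketches Klartag's needle-decomposition proof: one applies Theorem~\ref{th:Ndl} to $f = 1_A - \theta$, uses (iii) to get $\mu_I(A) = \theta$ on $\nu$-a.e.\ needle, invokes the disintegration inequality $m^+(A) \ge \int_Q \mu_I^+(A)\,d\nu(I)$, and reduces to the one-dimensional isoperimetric bound via (ii)---exactly the outline you give.
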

	
	 For the precise formula of the function $I_{(K,N,D)}$, we refer to \cite{Mil1,Mil2}. The estimation is sharp in all the parameters $K,N,D$ and the dimension $n$ of the manifold.
	In this paper we consider only the case where $K > 0$ and $N \in (-\infty,0)\cup\{\infty\}$. Then $I_{(K,N,D)}$ is defined as follows:
	
	\[ I_{(K,\infty,\infty) }(\theta):=\sqrt{\frac{K}{2\pi}}e^{\frac{-Ka^{2}(\theta)}{2}} \]
	where $a(\theta) \in \mathbb{R}$ is defined by $ \theta =\int_{-\infty}^{a(\theta)} \sqrt{\frac{K}{2\pi}}e^{\frac{-Ks^{2}}{2}}ds$ and for $D\in (0,\infty)$
	
	\[ I_{(K,\infty,D)}(\theta) := \inf_{\xi \in [-D,0]} f_{\xi,D} (\theta)  \textrm{ with } f_{\xi,D}(\theta) = \frac{e^{-\frac{Kb(\theta)^2}{2}}}{\int_{\xi}^{\xi+D}e^{-\frac{Ks^2}{2}}ds} \] where $b(\theta) \in (\xi,\xi+D)$ is defined by $\theta = \frac{\int_{\xi}^{b(\theta)}e^{-\frac{Ks^2}{2}}ds}{\int_{\xi}^{\xi+D}e^{-\frac{Ks^2}{2}}ds}$. For $N < 0$, we put $\sigma := K/(1-N)$ and define
	
	\[ I_{(K,N,\infty)}(\theta) := \frac{\cosh^{N-1}( {\sqrt{\sigma}}c(\theta))}{\int_{-\infty}^{\infty} \cosh^{N-1}( {\sqrt{\sigma}}s)ds}\]
	where $c(\theta)$ is defined by $\theta =\frac{\int_{-\infty}^{c(\theta)}\cosh^{N-1}( {\sqrt{\sigma}}s)ds}{\int_{-\infty}^{\infty} \cosh^{N-1}( {\sqrt{\sigma}}s)ds}$ and 
	
	\[ I_{(K,N,D)}(\theta) := \min \{K_{1,D}(\theta), K_{2,D}(\theta), K_{3,D}(\theta)\} \] 
	where $K_{1,D}(\theta) := \inf_{\xi\in\mathbb{R}}\frac{\cosh^{N-1}( {\sqrt{\sigma}}d_{1,\xi}(\theta))}{\int_{\xi}^{\xi + D} \cosh^{N-1}( {\sqrt{\sigma}}s)ds}$, $K_{2,D}(\theta) := \inf_{\xi > 0}\frac{\sinh^{N-1}( {\sqrt{\sigma}}d_{2,\xi}(\theta))}{\int_{\xi}^{\xi + D} \sinh^{N-1}( {\sqrt{\sigma}}s)ds}$, $K_{3,D}(\theta) := \frac{e^{(N-1)\sqrt{\sigma}d_{3}(\theta)}}{\int_{0}^{D} e^{(N-1)\sqrt{\sigma}s}ds}$ and $d_{1,\xi}(\theta)$, $d_{2,\xi}(\theta)$, $d_{3}(\theta)$ are defined by 
	\[ \theta = \frac{\int_{\xi}^{d_{1,\xi}(\theta)}\cosh^{N-1}( {\sqrt{\sigma}}s)ds}{\int_{\xi}^{\xi+D} \cosh^{N-1}( {\sqrt{\sigma}}s)ds} = \frac{\int_{\xi}^{d_{2,\xi}(\theta)}\sinh^{N-1}( {\sqrt{\sigma}}s)ds}{\int_{\xi}^{\xi + D} \sinh^{N-1}( {\sqrt{\sigma}}s)ds} = \frac{\int_{0}^{d_{3}(\theta)} e^{(N-1)\sqrt{\sigma}s}ds}{\int_{0}^{D} e^{(N-1)\sqrt{\sigma}s}ds}.\]
	
	We use the following lemma (Proposition 2.1 in \cite{Bob}) to study the equality case in the 1-dimensional isoperimetric inequality of $N = \infty$.
	
	\begin{lemma} (Bobkov) \label{lem:logconc}
	{
	Let $m$ be a log-concave measure on $\mathbb{R}$, then the minimum of $m^{+}(A)$ on the class of all Borel sets $A \subset \mathbb{R}$ with $m(A) = \theta$ coincides with the minimum on the subclass of the intervals $(-\infty,a]$ or $[b,\infty)$.
	}
	\end{lemma}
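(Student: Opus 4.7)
The plan is to use a sliding argument: I construct a measure-preserving one-parameter deformation of an arbitrary Borel set $A$ along which $m^+$ is concave, whose endpoints correspond to half-line configurations. Concavity forces $m^+$ at an endpoint of the path to be $\le m^+(A)$, delivering the desired half-line. Throughout, write $dm = \rho\,d\lambda = e^{-V}\,d\lambda$ with $V$ convex, so that $(\log\rho)'$ is non-increasing and $(\log\rho)'' \le 0$.

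First I would regularize: by outer regularity of $m$, I may assume $A$ is open, hence $A = \bigsqcup_i (a_i,b_i)$ is an at most countable disjoint union of open intervals, with
\[ m^+(A) = \sum_i \bigl[ \rho(a_i)\mathbf{1}_{\{a_i>-\infty\}} + \rho(b_i)\mathbf{1}_{\{b_i<+\infty\}} \bigr]. \]
To reduce the number of components, I pick two adjacent bounded ones $(a_j,b_j)$ and $(a_{j+1},b_{j+1})$ and slide their inner endpoints at the normalized speeds $b_j'(s) = 1/\rho(b_j(s))$ and $a_{j+1}'(s) = 1/\rho(a_{j+1}(s))$; this preserves $m(A_s) = \theta$, and since $\frac{d}{ds}\rho(b_j(s)) = (\log\rho)'(b_j(s))$ and $\frac{d^2}{ds^2}\rho(b_j(s)) = (\log\rho)''(b_j(s))/\rho(b_j(s)) \le 0$ (and similarly for $a_{j+1}$), the function $s \mapsto m^+(A_s)$ is concave. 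Its minimum over the maximal interval of validity is thus attained at an endpoint, where either the two components merge (losing the two strictly positive boundary terms $\rho(b_j(s)) + \rho(a_{j+1}(s))$) or one component is pushed entirely out of the support. Iterating strictly reduces the number of components, leaving a single interval.

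If that interval is already a half-line, we are done. Otherwise $A = (a,b)$ with $a,b$ in the interior of the support of $\rho$; slide both endpoints simultaneously at normalized speeds $a'(s) = 1/\rho(a(s))$, $b'(s) = 1/\rho(b(s))$. Measure is again preserved, and $m^+(A_s) = \rho(a(s)) + \rho(b(s))$ is concave in $s$ by the same second-derivative computation. Pushing to the endpoints of the maximal $s$-interval either sends $a(s)$ past the left end of the support (yielding the half-line $(-\infty,c]$ with $\Phi(c) = \theta$ and perimeter $\rho(c)$) or sends $b(s)$ past the right end (yielding $[d,+\infty)$ with perimeter $\rho(d)$); concavity then guarantees that at least one of these two values is $\le m^+(A)$, proving the lemma.

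I expect the main obstacle to be the bookkeeping at the endpoints of the deformation: verifying that $(a(s),b(s))$ remains well-defined and smooth on the maximal interval, and that the limiting value of $m^+(A_s)$ actually matches the Minkowski boundary of the limiting half-line. The latter requires a small case analysis at the boundary of $\mathrm{supp}(\rho)$, where $\rho$ may jump (so that $m^+$ can drop discontinuously when an endpoint crosses the support boundary); handled carefully, this only improves the inequality, but it is the point where log-concavity must be combined most delicately with the implicit function theorem controlling the sliding path.
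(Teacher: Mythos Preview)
The paper does not supply its own proof of this lemma; it is quoted from Bobkov \cite{Bob} (Proposition~2.1 there) and used as a black box. So there is no paper-proof to compare against directly. That said, the paper does prove a closely related statement, Lemma~\ref{lem:minizmizer}, for strictly log-concave symmetric measures, using a shifting argument: an interval $(a,b)$ is translated while its $m$-measure is held fixed, and one computes that $m^+$ is monotone along the shift (decreasing to the right when $a+b\ge 0$, to the left when $a+b\le 0$). Your argument is the same idea in a different parametrization: moving endpoints at speed $1/\rho$ makes each boundary term $\rho(\text{endpoint}(s))$ concave in $s$, hence $m^+(A_s)$ is concave and its minimum over the maximal $s$-interval sits at an endpoint, where components merge, vanish, or become half-lines. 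Your formulation is slightly cleaner in that concavity handles both directions simultaneously and does not require the symmetry hypothesis of Lemma~\ref{lem:minizmizer}.

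Two technical points should not be left implicit. First, the step ``by outer regularity of $m$, I may assume $A$ is open'' is not justified as stated: replacing $A$ by an open superset of nearly the same measure can increase $m^+(A)$, so outer regularity alone does not reduce the problem to open sets. The correct reduction is to show that the infimum of $m^+$ over Borel sets of measure $\theta$ agrees with the infimum over finite unions of intervals (this is standard in one dimension but requires an argument, e.g.\ via the explicit formula for the Minkowski enlargement of a one-dimensional set, or via lower semicontinuity of perimeter). Second, your iteration ``strictly reduces the number of components'' presupposes finitely many; an open set with finite perimeter can have countably many components (take intervals drifting to infinity under a Gaussian density), and then the iteration does not terminate. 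You must first truncate to finitely many components with controlled error before sliding. These are bookkeeping issues rather than conceptual gaps, and your closing paragraph already anticipates that the endpoint analysis is where the care is needed; just be aware that the regularization and finiteness reductions are part of that care, not consequences of outer regularity.
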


	\subsection{Needle decomposition in Riemannian geometry}
	
	This part is mostly taken from \cite{Kl}.
	
	Firstly we define transport rays associated to a Lipschitz function.
	
	\begin{definition}(Transport ray)
	{
	Let $u$ be a 1-Lipschitz function on $M$. We say that $I \subset M$ is a \emph{transport ray} associated with $u$ if $|u(x)-u(y)| = d(x,y)$ for all $x,y \in I$ and for all $J 	\supsetneq I$, there exist $x,y \in J$ with $|u(x) - u(y)| \ne d(x,y)$.
	}
	\end{definition}
	
	The needle decomposition (also called the localization), the main tool of this paper, was stated in one of the main theorems of \cite{Kl}:
	
	\begin{theorem}(Needle decomposition)\label{th:Ndl}
	{
	Let $(M,g,m)$ be a weighted Riemannian manifold satisfying $\emph{Ric}_{N} \geq K$ and $f$ is an integrable function on $M$ with $\int_{M} fdm = 0$ and $\int_{M}|f(x)|d(x_{0},x)m(dx) < \infty$ for some $x_{0} \in M$.
	Then there exists a $\mathrm{1}$-Lipschitz function $u$, a partition $Q$ on $M$, a measure $\nu$ on $Q$ and a family of probability measures $\{\mu_{I}\}_{I \in Q}$ on $M$ such that:
	\begin{enumerate}[{\rm (i)}]
			\item For any measurable set $A$ in $M$, we have
			$m(A) = \int_{Q}\mu_{I} (A) d\nu(I)$. For $\nu$-almost all $I \in Q$, $\mathrm{supp}(\mu_{I}) \subset I$.
			\item For $\nu$-almost all $I \in Q, I$ is a minimizing geodesic and transport ray associated with $u$. Moreover, if $I$ is not a singleton then the weighted Ricci curvature of $(I,|\cdot|,\mu_{I})$ satisfies $\mathrm{Ric}_{N}^{I} \geq K$.
			\item For $\nu$-almost every $I \in Q$, $\int_{I} fd\mu_{I} = 0$.
		\end{enumerate}
	}
	\end{theorem}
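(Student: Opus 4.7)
The plan is to follow Klartag's approach via $L^1$ optimal transportation. Write $f = f_+ - f_-$; the integrability hypotheses guarantee that the Monge--Kantorovich problem of transporting $f_+ m$ to $f_- m$ with cost $d(x,y)$ is well-posed, and Kantorovich duality produces a 1-Lipschitz potential $u\colon M\to\mathbb{R}$ that is optimal in the dual problem.

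Next I would show that the contact set $\{(x,y)\in M\times M : u(x)-u(y)=d(x,y)\}$ defines, through a full-$m$-measure subset of $M$, a partition into maximal transport rays, each of which is automatically a minimizing geodesic of $(M,g)$. The main regularity input here is Klartag's verification that the endpoint maps of the rays are measurable (indeed locally Lipschitz on the transport set), so that the quotient $\pi\colon M\to Q$ is well-defined. Setting $\nu=\pi_* m$, the disintegration theorem yields $m=\int_Q \mu_I\, d\nu(I)$ with $\mathrm{supp}(\mu_I)\subset I$. The balance property $\int_I f\, d\mu_I = 0$ then follows from cyclical monotonicity of the optimal coupling: every cross-section of the transport set separates equal amounts of $f_+$ and $f_-$, which after disintegration is exactly the needle-wise vanishing of $\int f\, d\mu_I$.

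The main obstacle is clause (ii), the preservation of the curvature-dimension bound on each needle. Parametrize a non-trivial ray $I$ by arc length and write $\mu_I = e^{-V(t)}\,dt$. I would express $\rho_I := e^{-V}$ in terms of the Jacobian of the normal exponential map based on $I$ together with the ambient weight $e^{-\psi}$, and then compute $V''$. Using the Riccati equation for the shape operator of the level hypersurfaces $\{u=\mathrm{const}\}$ (which are $C^{1,1}$ along rays because $u$ realizes a distance), the hypothesis $\mathrm{Ric}_N \ge K$ translates into the pointwise inequality
\[ V''(t) \;\ge\; K + \frac{V'(t)^2}{N-1}, \]
which is precisely the one-dimensional form of $\mathrm{Ric}_N^I \ge K$ dictated by Definition \ref{df:wRic}. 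The delicate point is that this Riccati analysis must cover both the classical range $N\in[n,\infty]$ and the negative range $N\in(-\infty,0)$; in the latter case the sign of the correction term $(V')^2/(N-1)$ flips, and what saves the argument is precisely the matching sign flip in the definition of $\mathrm{Ric}_N$ for negative $N$. Tracking this compatibility carefully, along with handling the singleton rays (on which the claim is vacuous) and the $m$-negligible exceptional set where transport branches, is the heart of the construction and is where the extension to $N<0$ by Ohta and by Cavalletti--Mondino enters.
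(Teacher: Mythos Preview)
The paper does not prove Theorem~\ref{th:Ndl} at all: it is quoted as a preliminary result from Klartag's memoir \cite{Kl} (with the extensions to metric measure spaces and Finsler geometry attributed to \cite{CM} and \cite{Oneedle}), and no argument is given beyond the citation. So there is no ``paper's own proof'' to compare your proposal against.

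Your sketch is a fair outline of Klartag's strategy---Kantorovich duality to produce $u$, ray decomposition of the transport set, disintegration to obtain $(\nu,\{\mu_I\})$, and the Riccati/Jacobian computation to inherit the curvature bound on each needle. One point of caution: the balance condition (iii) in Klartag's framework is not derived merely from cyclical monotonicity of the coupling but from a mass-balance argument along each ray (the guiding function $u$ is chosen so that optimal transport moves mass only along rays, and conservation of $f_+m$ versus $f_-m$ then localizes). Also, for the negative-$N$ range the relevant reference in the paper's bibliography is Ohta \cite{Oneedle,Oneg} rather than Cavalletti--Mondino, who treat $N\in(1,\infty)$. But since the paper treats the theorem as a black box, your proposal goes well beyond what the paper itself supplies.
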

    
    We recall Klartag's proof in \cite{Kl} of the isoperimetric inequality by the needle decomposition, reducing the isoperimetic inequality to the 1-dimensional analysis. See also \cite{CM,Oneedle} for the cases of metric measure spaces and Finslerian manifolds, respectively.
	
	Let $(M,g,m)$ satisfy $\mathrm{Ric}_{N} \geq K  > 0$ for $N \in (-\infty,0)\cup\{\infty\}$ with $\mathrm{dim} M \geq 2$, and assume $m(M) = 1$. We will see that $I_{(M,g,m)}(\theta) \geq I_{(K,N,\infty)}(\theta)$ holds for all $\theta \in (0,1)$.
	
	Take a Borel set $A\subset M$ with $m(A) = \theta$. Put $f(x) := 1_{A}(x) - \theta$. Then $\int_{M} fdm = 0$. We obtain a needle decomposition $Q,\nu, \{\mu_{I}\}_{I \in Q}$ associated with $f$ as in Theorem \ref{th:Ndl}. Note that (iii) in Theorem \ref{th:Ndl} implies $\mu_{I}(A) = m(A) = \theta$ for $\nu$-a.e. $I \in Q$. By (i) in Theorem \ref{th:Ndl} and the definition of the boundary measure $m^{+}$, we have
	\begin{equation}\label{equ:orthogonal}
	m^{+}(A) \geq \int_{Q}\mu_{I}^{+} (A) d\nu(I).
	\end{equation}
	By the weighted Ricci curvature bound (ii) in Theorem \ref{th:Ndl}, applying the $1$-dimensional isoperimetric inequality (see the proofs of Lemmas \ref{lem:1dim} and \ref{lem:1dim-neg}) on each needle $I$ yields
	\begin{equation}
	m^{+}(A) \geq \int_{Q} I_{(K,N,\infty)}(\theta) d\nu(I) = I_{(K,N,\infty)}(\theta).
	\end{equation}
	
	\section{Rigidity for Bakry--Ledoux isoperimetric inequality}
	
	Using needle decomposition argument, we will discuss the equality case of isoperimetric problem on weighted manifolds. For $N \in (1,\infty)$ Cavalletti--Mondino \cite{CM} showed that the equality in the isoperimetric inequality implies that the space is necessarily isometric to the spherical suspension. Their proof relies on the  maximal diameter theorem in \cite{Ke} and is not generalized to $N = \infty$ nor $N < 0$. In this section we consider the case of $N = \infty$.
	The rigidity of this dimension free version of isoperimetric inequality of weighted manifolds was studied by Frank Morgan in \cite[Theorem 18.7]{Mor} by using classical techniques in geometric measure theory. There was also a alternative proof by Raphael Bouyrie using $\Gamma$-calculus in \cite{Bou}. Here we give another proof using needle decomposition.
	
	Firstly, we prove $I_{(K,\infty,D)} > I_{(K,\infty,\infty)}$.
	
	\begin{lemma}\label{lem:1dim1}
	{
	For every $\theta \in (0,1)$, we have 
	\begin{equation}\label{ine:infty}
	  I_{(K,\infty,D)}(\theta) > I_{(K,\infty,\infty)}(\theta).
	\end{equation}
	}
	\end{lemma}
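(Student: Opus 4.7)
The plan is to view both $I_{(K,\infty,\infty)}(\theta)$ and, for each fixed $\xi\in[-D,0]$, $f_{\xi,D}(\theta)$ as solutions on $(0,1)$ of the same second-order ODE
\[ y \cdot y'' = -K, \]
but with \emph{different} boundary values. Indeed $I_{(K,\infty,\infty)}$ vanishes at $0$ and $1$ (as $a(\theta)\to\pm\infty$), whereas $f_{\xi,D}$ is strictly positive there, namely $f_{\xi,D}(0)=e^{-K\xi^2/2}/Z_{\xi,D}$ and $f_{\xi,D}(1)=e^{-K(\xi+D)^2/2}/Z_{\xi,D}$ (writing $Z_{\xi,D}:=\int_\xi^{\xi+D}e^{-Ks^2/2}\,ds$), since $b(\theta)$ is forced into the finite interval $[\xi,\xi+D]$. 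A standard maximum-principle comparison will then force the pointwise strict inequality $f_{\xi,D}>I_{(K,\infty,\infty)}$ on $(0,1)$ for every admissible $\xi$, and a compactness argument in $\xi$ will upgrade this to the claimed strict inequality for the infimum $I_{(K,\infty,D)}$.

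First I would verify the common ODE. Differentiating the implicit relations defining $a(\theta)$ and $b(\theta)$ gives $a'(\theta)=1/I_{(K,\infty,\infty)}(\theta)$ and $b'(\theta)=1/f_{\xi,D}(\theta)$, and the chain rule then yields $I_{(K,\infty,\infty)}'(\theta)=-Ka(\theta)$, $I_{(K,\infty,\infty)}''(\theta)=-K/I_{(K,\infty,\infty)}(\theta)$, and analogously for $f_{\xi,D}$ with $b(\theta)$ in place of $a(\theta)$. Next, I would carry out the maximum-principle step for $w:=f_{\xi,D}-I_{(K,\infty,\infty)}$: since both summands are positive and solve $y y''=-K$, a short manipulation gives
\[ w''(\theta)=\frac{K\,w(\theta)}{f_{\xi,D}(\theta)\,I_{(K,\infty,\infty)}(\theta)}. \]
At any interior global minimum $\theta_0$ of $w$ on $(0,1)$ one has $w''(\theta_0)\ge 0$; if $w(\theta_0)<0$ the displayed identity forces $w''(\theta_0)<0$, a contradiction. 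Combined with the strict positivity of $w$ at the two endpoints, this yields $w\ge 0$ on $[0,1]$. Equality $w(\theta_0)=0$ at an interior point would then imply $w'(\theta_0)=0$ as well, and uniqueness of solutions of the ODE in the regime $y>0$ would propagate this to $w\equiv 0$, contradicting the boundary data. Hence $w>0$ strictly on $(0,1)$.

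To conclude, observe that $\xi\mapsto f_{\xi,D}(\theta)$ is continuous on the compact interval $[-D,0]$ (smoothness of $Z_{\xi,D}$ in $\xi$ is clear, and smoothness of $b(\theta)$ in $\xi$ follows from the implicit function theorem since $\partial_b\int_\xi^b e^{-Ks^2/2}\,ds = e^{-Kb^2/2}>0$). Therefore the infimum in the definition of $I_{(K,\infty,D)}(\theta)$ is attained at some $\xi^\ast\in[-D,0]$, and the previous step gives $I_{(K,\infty,D)}(\theta)=f_{\xi^\ast,D}(\theta)>I_{(K,\infty,\infty)}(\theta)$. The main technical hurdle is the strict form of the maximum-principle comparison---especially ruling out tangential contact of $w$ with zero at an interior point via ODE uniqueness---but once the common ODE is identified this is routine, and the rest of the argument reduces to elementary calculus.
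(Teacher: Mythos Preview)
Your argument is correct and takes a somewhat different route from the paper's. Both proofs begin by differentiating the implicit relations to obtain $I_{(K,\infty,\infty)}'(\theta)=-Ka(\theta)$ and $f_{\xi,D}'(\theta)=-Kb(\theta)$, but then diverge. The paper stays at first order: at an interior critical point of $h=f_{\xi,D}-I_{(K,\infty,\infty)}$ one has $a(\theta)=b(\theta)$, and the difference can then be computed \emph{explicitly} as $e^{-Ka(\theta)^2/2}\bigl(1/Z_{\xi,D}-\sqrt{K/2\pi}\bigr)$, which is bounded below, uniformly in $\xi\in[-D,0]$, by $e^{-KD^2/2}\bigl(1/\int_{-D}^{D}e^{-Ks^2/2}\,ds-\sqrt{K/2\pi}\bigr)>0$. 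This dispatches the infimum over $\xi$ directly, without any compactness step. Your proof instead passes to second order, identifies the common autonomous ODE $yy''=-K$, and runs a maximum-principle/uniqueness comparison for each fixed $\xi$, then invokes continuity in $\xi$ and compactness of $[-D,0]$ at the end. Your approach is cleaner conceptually and would adapt verbatim to other one-parameter families of model densities whose isoperimetric profiles satisfy a common second-order ODE; the paper's approach is slightly more elementary (no ODE uniqueness, no compactness) and has the bonus of producing a quantitative, $\xi$-uniform lower bound on the gap, which could feed into stability estimates.
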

	\begin{proof}
	{
	We abbreviate in this proof $I := I_{(K,\infty,\infty)}$. By the definitions of $a(\theta)$ and $b(\theta)$ in $\S$2.2, we have
	$1 = I(\theta)a'(\theta) = f_{\xi,D}(\theta)b'(\theta)$ for a fixed $\xi \in [-D,0]$. By a straight forward calculation:
	\begin{equation}
	   I'(\theta) = -KI(\theta)a(\theta)a'(\theta) = -Ka(\theta), 
	\end{equation}
	\begin{equation}
	   f_{\xi,D}'(\theta) = -Kf_{\xi,D}(\theta)b(\theta)b'(\theta) = -Kb(\theta). 
	\end{equation}
	Putting $h=f_{\xi,D} - I$, we have $h \geq \min\{h(0),h(1), h(\theta)$ where $I'(\theta) = f'_{\xi,D}(\theta)\}$.
	When $I'(\theta) = f'_{\xi,D}(\theta)$, we have $a(\theta) = b(\theta)$ and 
	
	\[ h(\theta) = \bigg(\frac{1}{\int_{\xi}^{\xi + D}\varphi(s)ds} - \sqrt{\frac{K}{2\pi}}\bigg)\varphi(a(\theta)) \]
	where $\varphi(t) = e^{\frac{-Kt^2}{2}}$. Note also that $h(0) = \frac{\varphi(\xi)}{\int_{\xi}^{\xi + D}\varphi(s)ds}$ and $h(1) = \frac{\varphi(\xi + D)}{\int_{\xi}^{\xi + D}\varphi(s)ds}$. Therefore
	
	\[h(\theta) \geq \varphi (D) \bigg( \frac{1}{\int_{\xi}^{\xi + D}\varphi(s)ds} - \sqrt{\frac{K}{2\pi}} \bigg) > \varphi (D) \bigg( \frac{1}{\int_{-D}^{D}\varphi(s)ds}-\sqrt{\frac{K}{2\pi}} \bigg) > 0. \]
	Taking the infimum over $\xi\in[-D,0]$ shows \eqref{ine:infty}.
	
	}
	\end{proof}
	
	Now we consider the equality case of isoperimetric inequality in 1-dimensional manifolds.
	
	\begin{lemma}($1$-dimensional case)\label{lem:1dim}
	{
	Let $(M,g,m)$ be a weighted Riemannian manifold of dimension $1$ satisfying $\emph{Ric}_{\infty} \geq K > 0$ and suppose $m(M) = 1$. Assume that there exists $\theta_{0} \in (0,1)$ with $I_{(M,g,m)}(\theta_{0}) = I_{(K,\infty,\infty)}(\theta_{0})$. Then $(M,g,m) = (\mathbb{R}, |\cdot|, \sqrt{\frac{K}{2\pi}}e^{-\frac{Kx^{2}}{2}}dx)$ as weighted manifolds. Moreover, if a Borel set $A$ satisfies $m^{+}(A) = I_{(K,\infty,\infty)}(m(A))$ then $A$ is $(-\infty,a]$ or $[b,\infty)$ up to a difference of an $m$-negligible set.
	}
	\end{lemma}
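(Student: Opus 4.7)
Since $M$ is one-dimensional, complete, connected and boundaryless, $M$ is diffeomorphic to either $\mathbb{R}$ or $S^1$. In dimension one the condition $\mathrm{Ric}_\infty \ge K > 0$ simply reads $\psi'' \ge K$, which is incompatible with the compact case: integrating $\psi''$ around $S^1$ yields both $0$ and a quantity $\ge K\cdot\mathrm{length}(S^1)>0$. Hence $(M,g)$ is isometric to $(\mathbb{R},dt^2)$ and $m=e^{-\psi(t)}\,dt$ for a smooth strictly convex $\psi$ with $\int_{\mathbb{R}}e^{-\psi}=1$. The plan is to identify $\psi$ with a translate of $Kt^2/2+\log\sqrt{2\pi/K}$ via a one-dimensional ODE comparison that promotes equality at a single $\theta_0$ to equality everywhere.

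Because $\psi$ is convex, $m$ is log-concave, so Bobkov's lemma (Lemma \ref{lem:logconc}) reduces the infimum defining $I_{(M,g,m)}(\theta)$ to half-lines. With $F(a):=\int_{-\infty}^a e^{-\psi(s)}\,ds$ and $J(\theta):=e^{-\psi(F^{-1}(\theta))}$, this reads $I_{(M,g,m)}(\theta)=\min\{J(\theta),J(1-\theta)\}$. Combined with $I_{(M,g,m)}\ge J_0:=I_{(K,\infty,\infty)}$ (Theorem \ref{th:iso}) and the symmetry $J_0(\theta)=J_0(1-\theta)$ of the Gaussian profile, this forces $J\ge J_0$ pointwise on $(0,1)$; after possibly replacing $\theta_0$ by $1-\theta_0$, the hypothesis becomes $J(\theta_0)=J_0(\theta_0)$. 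A direct differentiation using $F'(a)=e^{-\psi(a)}$ gives $J'=-\psi'\circ F^{-1}$ and $J''=-(\psi''\circ F^{-1})/J$, so that
\[ JJ'' = -\psi''\circ F^{-1} \le -K = J_0 J_0'', \]
the right-hand equality being the defining ODE of the Gaussian isoperimetric profile.

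The key step is rigidity of this ODE inequality. Setting $h:=J-J_0\ge 0$ and subtracting yields
\[ h'' \le -\frac{K}{J}+\frac{K}{J_0}=\frac{Kh}{JJ_0}, \]
so $h$ is a non-negative supersolution of $h''-qh=0$ with $q:=K/(JJ_0)$ continuous, non-negative and locally bounded on $(0,1)$. Since $h$ attains its minimum value $0$ at the interior point $\theta_0$, the Hopf strong maximum principle applied to $-h$ (whose operator has zeroth-order coefficient $-q\le 0$) forces $h\equiv 0$ on $(0,1)$. I expect this to be the main technical point: although $q$ blows up at the endpoints of $(0,1)$, this is harmless because the principle is invoked locally around $\theta_0$ and then propagated via standard openness-and-closedness of the vanishing set $\{h=0\}$. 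From $J\equiv J_0$ one obtains $F'(a)=J_0(F(a))=\Phi'(\Phi^{-1}(F(a)))$ with $\Phi$ the Gaussian CDF of variance $1/K$, so $(\Phi^{-1}\circ F)'\equiv 1$ and hence $F(a)=\Phi(a+c)$ for some $c\in\mathbb{R}$; translating by $-c$ identifies $(M,g,m)$ with the standard Gaussian line. The final rigidity for equality-achieving Borel sets is now the classical one-dimensional Gaussian isoperimetric rigidity: a set $A$ of finite $m$-perimeter is essentially a disjoint union of intervals whose boundary mass is $\sum_i e^{-\psi(c_i)}$ over its endpoints, and the strict convexity of $\psi(t)=Kt^2/2+\mathrm{const}$ under the constraint $m(A)=\theta$ forces the minimizer to have precisely one endpoint, so $A$ coincides with $(-\infty,a]$ or $[b,\infty)$ up to an $m$-null set.
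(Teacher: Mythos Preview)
Your argument is correct but follows a genuinely different route from the paper's proof. The paper rules out compactness via the strict inequality $I_{(K,\infty,D)}>I_{(K,\infty,\infty)}$ of Lemma~\ref{lem:1dim1}, then recalls Milman's proof of the one-dimensional inequality: from the pointwise $K$-convexity estimate $\psi(x+t)\ge \psi(x)+\psi'_\pm(x)t+\tfrac{K}{2}t^2$ one obtains integral bounds \eqref{equ:inf}--\eqref{equ:minusinf} on $m^+(A)$, optimizes over the parameter $H=-\psi'(a)$, and then reads off that equality at $\theta_0$ forces equality in the convexity inequality everywhere, i.e.\ $\psi''\equiv K$. Your approach instead works at the level of the isoperimetric profile: you encode $\psi''\ge K$ as the second-order differential inequality $JJ''\le -K$, compare with the Gaussian profile via $h=J-J_0\ge 0$, and invoke the strong maximum principle at the interior touching point $\theta_0$ to conclude $h\equiv 0$.

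Both approaches are standard and valid; the main practical difference is robustness with respect to regularity of $\psi$. The paper deliberately phrases its argument using one-sided derivatives $\psi'_\pm$ and notes that smoothness of $\psi$ is not assumed, because the lemma is later applied on each needle $(I,|\cdot|,\mu_I)$ where the density is a priori only $K$-convex. Your profile-ODE argument, as written, requires $\psi\in C^2$ in order to differentiate $J$ twice and to have a continuous coefficient $q=K/(JJ_0)$; this is fine for the lemma as stated (where $\psi\in C^\infty$ by the standing conventions), but would need a distributional or viscosity reformulation to cover the needle application directly. Conversely, your method is more conceptual, avoids the auxiliary Lemma~\ref{lem:1dim1}, and makes the rigidity mechanism (an interior minimum of a supersolution) transparent. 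For the final claim about optimal sets being half-lines, the paper simply cites \cite[Proposition~3.1]{CFMP}; your sketch via endpoint counting under strict convexity of $\psi$ is the same content, though you should make explicit the shifting step (as in Lemma~\ref{lem:minizmizer}) that rules out both two-sided intervals and complements of intervals.
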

	
	\begin{proof}
	{
	Note first that, since $I_{(K,\infty,D)}(\theta_{0}) > I_{(K,\infty,\infty)}(\theta_{0})$ for $D < \infty$, $M$ is noncompact and hence isometric to $\mathbb{R}$. In order to show that $m$ is Gaussian, we recall the proof of $I_{(M,g,m)} \geq I_{(K, \infty,\infty)}$ in \cite[Theorem 1.2]{Mil1}.
	
	Since $M$ is 1-dimensional and $\textrm{Ric}_{\infty} \geq K$, $\psi$ is a $K$-convex function (we do not assume the smoothness of $\psi$ for later use on each needle). Hence by Lemma \ref{lem:logconc}, it suffices to consider $A$ of the form $(-\infty,a]$ or $[b,\infty)$ as a minimizer of the isoperimetric problem where $m(A) = \theta$ and $m^{+}(A) = I_{(M,g,m)}(\theta)$. We assume $A=(-\infty,a]$, the case $A = [b,\infty)$ is similar.
	
	Since  $\psi$ is $K$-convex, we have
	\begin{equation} \label{equ:plus}
	{\psi(x+t) \geq \psi(x) + \psi'_{+}(x)t + \frac{Kt^2}{2}} \qquad (t>0),
	\end{equation}
	and
	\begin{equation} \label{equ:minus}
	{\psi(x+t) \geq \psi(x) + \psi'_{-}(x)t + \frac{Kt^2}{2}} \qquad  (t<0).
	\end{equation}
	Therefore, for $t>0$,
	\begin{equation} \label{equ:key-epo}
	{e^{-\psi(x+t)} \leq e^{-\psi(x)}e^{-\psi'_{+}(x)t - \frac{Kt^2}{2}}}.
	\end{equation}
	}We obtain the following estimation
	\begin{equation}\label{equ:key}
	m(A^{r}) - m(A) = \int_{A^{r}}dm - \int_{A}dm \leq m^{+}(A) \int_{0}^{r} e^{-\psi'_{+}(a)t - \frac{Kt^2}{2}} dt.
	\end{equation}
	Letting $r$ go to $\infty$, we have
	\begin{equation}\label{equ:inf}
	1 - \theta \leq m^{+}(A) \int_{0}^{\infty} e^{-\psi'_{+}(a)t - \frac{Kt^2}{2}} dt.
	\end{equation}
	Using a similar argument for $M\setminus A$, we also obtain
	
	\begin{equation}\label{equ:minusinf}
	\theta \leq m^{+}(A) \int_{-\infty}^{0} e^{-\psi'_{-}(a)t - \frac{Kt^2}{2}} dt \leq m^{+}(A) \int_{-\infty}^{0} e^{-\psi'_{+}(a)t - \frac{Kt^2}{2}} dt
	\end{equation}
	since $\psi'_{-}(a) \leq \psi'_{+}(a)$ by the $K$-convexity.
	Hence
	\begin{equation}\label{equ:mminf}
	I_{(M,g,m)} (\theta)\geq \inf_{H\in \mathbb{R}} \max\bigg\{\frac{1-\theta}{\int_{0}^{\infty} J_{H}(t) dt}, \frac{\theta}{\int_{-\infty}^{0} J_{H}(t)dt}\bigg\},
	\end{equation}
	where $J_{H}(t) = e^{Ht - \frac{Kt^2}{2}}$.
	When $H$ varies from $-\infty$ to $\infty$, the first (second) term in the right hand side of (\ref{equ:mminf}) varies monotonically from $\infty$ to $0$ ($0$ to $\infty$). Therefore the infimum over $H\in \mathbb{R}$ is attained at the unique point $H_{\theta}$ such that both terms coincide:
	
	\begin{equation}\label{infimum}
	\frac{\int_{0}^{\infty} J_{H_{\theta}}(t) dt}{1-\theta} = \frac{\int_{-\infty}^{0} J_{H_{\theta}}(t)dt}{\theta} = \int_{-\infty}^{\infty} J_{H_{\theta}}(t)dt.
	\end{equation}
	We have by $H_{\theta}t - \frac{Kt^2}{2} = -\frac{K}{2}(t-\frac{H_{\theta}}{K})^2 + \frac{H_{\theta}^2}{2K}$:
	
	\begin{equation}
	\theta = \frac{\int_{-\infty}^{0} J_{H_{\theta}}(t)dt}{\int_{-\infty}^{\infty} J_{H_{\theta}}(t)dt} = \frac{\int_{-\infty}^{\frac{-H_{\theta}}{K}} e^{\frac{-Ks^2}{2}}ds}{\int_{-\infty}^{\infty} e^{\frac{-Ks^2}{2}}ds} = \sqrt{\frac{K}{2\pi}} \int_{-\infty}^{\frac{-H_{\theta}}{K}} e^{\frac{-Ks^{2}}{2}}ds.
	\end{equation}
	Thus $a(\theta) = -H_{\theta}/K$ and
	
	\begin{equation} \label{equ:main}
	I_{(M,g,m)} (\theta)\geq \bigg(\int_{-\infty}^{\infty} J_{H_{\theta}}(t)dt\bigg)^{-1} = \sqrt{\frac{K}{2\pi}}e^{\frac{-Ka^{2}(\theta)}{2}}.
	\end{equation}
	
	Now, if equality holds in \eqref{equ:main} at $\theta_{0}$, then we have equality both in \eqref{equ:plus} and \eqref{equ:minus}, as well as $\psi'_{-} (a) = \psi'_{+} (a)$. Hence $\psi'' \equiv K$ and we obtain $(M,g,m) = (\mathbb{R}, |\cdot|, \sqrt{\frac{K}{2\pi}}e^{-\frac{Kx^{2}}{2}}dx)$ as metric measure spaces. Moreover, by Proposition 3.1 in \cite{CFMP}, if a Borel set $A$ satisfies $m^{+}(A) = I_{(K,\infty,\infty)}(m(A))$ then $A = (-\infty,a]$ or $[b,\infty)$.
	\end{proof}

	Combining Lemma \ref{lem:1dim}, Theorem \ref{th:iso} and Theorem \ref{th:poincare}, we obtain:
	\begin{theorem}(High dimensional case)\label{th:ndim}
	{
	Let $(M,g,m)$ be a weighted Riemannian manifold of dimension $n \geq 2$ satisfying $\emph{Ric}_{\infty} \geq K > 0$ and suppose $m(M) = 1$. Assume that there exists $\theta_{0} \in (0,1)$ with $I_{(M,g,m)}(\theta_{0}) = I_{(K,\infty,\infty)}(\theta_{0})$. Then the following hold:
	\begin{enumerate}[{\rm (i)}]
	        \item $M$ is isometric to the product space $\Sigma^{n-1} \times \mathbb{R}$
		    as weighted Riemannian manifolds,
		    where $\Sigma^{n-1}$ is an $(n-1)$-dimensional manifold with $\emph{Ric}_{\infty} \ge K$
		    and $\mathbb{R}$ is equipped with the Gaussian measure $e^{-\frac{Kx^2}{2}}dx$.
			\item If a Borel set $A$ satisfies $m^{+}(A) = I_{(K,\infty,\infty)}(m(A))$ then $A$ is a half-space of $M$ (with respect to the product structure, the precise meaning will be given in the proof). 
		\end{enumerate}
		
	}
	\end{theorem}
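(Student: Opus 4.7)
My strategy is to combine Klartag's needle decomposition with the one-dimensional rigidity of Lemma \ref{lem:1dim} to produce a Poincaré extremal function on $M$, and then apply the Cheng--Zhou theorem (Theorem \ref{th:poincare}) to extract the product splitting.

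First I would pick an isoperimetric minimizer $A \subset M$ with $m(A) = \theta_{0}$ and $m^{+}(A) = I_{(K,\infty,\infty)}(\theta_{0})$, and apply Theorem \ref{th:Ndl} to $f = \mathbf{1}_{A} - \theta_{0}$ to obtain a $1$-Lipschitz guiding function $u$ and a disintegration $(Q, \nu, \{\mu_{I}\})$ with $\mu_{I}(A) = \theta_{0}$ for $\nu$-a.e.\ $I$. Running the chain of inequalities in (2.2) with equality then forces $\mu_{I}^{+}(A) = I_{(K,\infty,\infty)}(\theta_{0})$ for $\nu$-a.e.\ $I$. Lemma \ref{lem:1dim} applied to each such needle identifies $(I,|\cdot|,\mu_{I})$ with $(\mathbb{R}, |\cdot|, \sqrt{K/(2\pi)}\, e^{-Kx^{2}/2}\,dx)$ as weighted Riemannian manifolds, and shows that $A \cap I$ is a Gaussian half-line up to $\mu_{I}$-null set.

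Next I would construct an eigenfunction realizing $\lambda_{1} = K$. Define $v : M \to \mathbb{R}$ needle-wise by letting $v|_{I}$ be the centered Gaussian coordinate on $I$, oriented so that it agrees with the direction in which $u|_{I}$ is increasing. The one-dimensional Gaussian identities $\int_{I} v\,d\mu_{I} = 0$, $\int_{I} v^{2}\,d\mu_{I} = 1/K$ and $(v')^{2} = 1$ $\mu_{I}$-a.e., integrated against $\nu$, should give
\[ \int_{M} v\,dm = 0, \qquad \int_{M} v^{2}\,dm = \frac{1}{K}, \qquad \int_{M} |\nabla v|^{2}\,dm = 1. \]
The Rayleigh quotient of $v$ thus equals $K$, and combined with the a priori bound $\lambda_{1} \ge K$ under $\mathrm{Ric}_{\infty} \ge K$ this yields $\lambda_{1} = K$ with eigenfunction $v$. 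Theorem \ref{th:poincare} then provides the splitting $M \cong \Sigma^{n-1} \times \mathbb{R}$ with the Gaussian factor and identifies $v$ with the projection $(x,t) \mapsto t$; the half-line structure of $A \cap I$ on each needle, aligned with the $\mathbb{R}$-fibers, further identifies $A$ (up to an $m$-null set) with a half-space $\Sigma^{n-1} \times (-\infty,c]$ or $\Sigma^{n-1} \times [c,\infty)$.

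The hard part will be the regularity of $v$ in the previous paragraph: a priori the Gaussian center $c_{I}$ of $\mu_{I}$ in the $u$-coordinate could vary discontinuously in $I$, which would block placing $v = u - c_{I}$ in a suitable Sobolev class and hence the Rayleigh quotient argument. To handle this I would exploit that $A \cap I = \{u \le a_{I}\}$ (up to null sets) with $\mu_{I}(A) = \theta_{0}$ constant: the Gaussian quantile relation forces $a_{I} - c_{I}$ to be the fixed constant $a(\theta_{0})$, and $\{a_{I}\}_{I}$ traces out the single hypersurface $\partial A$ along $u$-transport rays, so $a_{I}$, hence $c_{I}$, is constant $\nu$-a.e. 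Once $c_{I}$ is constant, $v = u - \mathrm{const}$ is automatically $1$-Lipschitz, and the remaining identifications via the disintegration and coarea formulas are routine.
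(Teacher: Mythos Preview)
Your overall strategy matches the paper's --- needle decomposition, one-dimensional rigidity on each needle, then Poincar\'e rigidity via Theorem~\ref{th:poincare} --- but your construction of the eigenfunction introduces an unnecessary difficulty and your proposed fix for it has a gap. Lemma~\ref{lem:1dim} only tells you that $A\cap I=(-\infty,a_I]$ \emph{or} $[b_I,\infty)$; you have not excluded that different needles realize different alternatives, yet in your last paragraph you tacitly assume the first form holds uniformly. Without this, $a_I-c_I$ can be either $a(\theta_0)$ or $-a(\theta_0)$, so constancy of $c_I$ does not follow. Your appeal to ``$\{a_I\}$ traces out the single hypersurface $\partial A$'' also does not by itself force $u|_{\partial A}$ to be constant.

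The paper sidesteps this entirely: it takes the guiding function $u$ itself (already $1$-Lipschitz with $|\nabla u|\equiv 1$) as the test function and uses the variance decomposition
\[
\int_M (u-u_M)^2\,dm \;\ge\; \int_Q\int_I (u-u_I)^2\,d\mu_I\,d\nu \;=\;\frac{1}{K}\;=\;\frac{1}{K}\int_M|\nabla u|^2\,dm,
\]
which requires no information about whether $u_I$ varies with $I$. This already gives Rayleigh quotient $\le K$, hence equality in the Poincar\'e inequality for $u-u_M$, and no needle-wise gluing or regularity argument is needed. Separately, your treatment of (ii) is too quick: once the splitting is in hand one still has to exclude the mixed configuration $A=(Q_1\times(-\infty,r_\theta])\cup(Q_2\times[\bar r_\theta,\infty))$ with $m_\Sigma(Q_1)\in(0,1)$, which the paper does by a perimeter-excess contradiction using $m_\Sigma^{+}(Q_1)>0$.
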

	
	\begin{proof}
	{
	(i) Let $A_{0}$ be a Borel set on $M$ with $m(A_{0}) = \theta_{0}$ and $m^{+}(A_{0}) = I_{(K,\infty,\infty)}(\theta_{0})$. (See \cite{Mil1} for the existence of $A_{0}$). Let $u, Q,\nu,\{\mu_{I}\}_{I \in Q}$ be the elements of the needle decomposition associated with the function $f(x) = 1_{A}(x) - \theta_{0}$ as in Theorem \ref{th:Ndl}.
	
	By the proof of Theorem \ref{th:iso} explained after Theorem \ref{th:Ndl}, on $\nu$-a.e. needle $I \in Q$, equality holds for the isoperimetric inequality $I_{(I,|\cdot|,\mu_{I})}(\theta_{0}) \geq I_{(K,\infty,\infty)}(\theta_{0})$. By Lemma \ref{lem:1dim}, we obtain $(I, |\cdot|, \mu_{I}) = (\mathbb{R}, |\cdot|, \sqrt{\frac{K}{2\pi}}e^{-\frac{Kx^{2}}{2}}dx)$ as weighted manifolds and $A_{0} \cap I = (-\infty, r_{\theta_{0}}]$ or $[\bar{r_{\theta_{0}}}, \infty)$ by passing to an isometry.
	On each $I \in Q$, we have $|u(x)-u(y)| = d(x,y)$ and hence
	\begin{equation} \label{equ:pleft}
	\int_{I}(u-u_{I})^2 d\mu_{I} = \sqrt{\frac{K}{2\pi}}\int_{\mathbb{R}}x^2 e^{-\frac{Kx^{2}}{2}}dx = \displaystyle\frac{1}{K} 
	\end{equation} where $u_{I}$ stands for the mean value of $u$ on $(I,\mu_{I})$.
	Since $u$ is 1-Lipschitz, we have $1 = |\nabla_{I} u| \leq |\nabla u| \leq 1$. Therefore, $|\nabla u| = 1$ and
	
	\begin{equation} \label{equ:pright}
	\int_{I}|\nabla u|^2 d\mu_{I} = \sqrt{\frac{K}{2\pi}}\int_{\mathbb{R}}e^{-\frac{Kx^{2}}{2}}dx = 1.
	\end{equation}
	By the Fubini theorem and Theorem \ref{th:Ndl}(i), we obtain from \eqref{equ:pleft} and \eqref{equ:pright} that
	
	\begin{align*}
	\int_{M}(u-u_{M})^{2}dm & = \int_{M}u^2dm - \bigg(\int_{M}udm\bigg)^2 \\ & = \int_{Q}\int_{I}u^2d\mu_{I}d\nu - \bigg(\int_{Q}\big(\int_{I}ud\mu_{I}\big)d\nu\bigg)^2 \\ & \geq \int_{Q}\bigg( \int_{I}u^{2}d\mu_{I} - \big(\int_{I}ud\mu_{I}\big)^2 \bigg)d\nu = \int_{Q}\int_{I}(u-u_{I})^2d\mu_{I}d\nu \\
	&= \frac{1}{K} = \displaystyle\frac{1}{K}\int_{M}|\nabla u|^{2}dm.
	\end{align*}
	
	Therefore $v = u - u_{M}$ satisfies the equality in the Poincar\'e inequality and hence becomes an eigenfunction for the first nonzero eigenvalue of the weighted Laplacian on $M$. By Theorem \ref{th:poincare},  $M$ is isometric to the product space $\Sigma^{n-1} \times \mathbb{R}$ as weighted Riemannian manifolds, where $\Sigma^{n-1} = u^{-1}(0)$ is an $(n-1)$-dimensional manifold with $\mathrm{Ric}_{\infty} \ge K$ and $\mathbb{R}$ is equipped with the standard Gaussian measure. Moreover $u(x,t)$ (as the function of the product space) is constant on $\Sigma^{n-1} \times \{t\}$, $u(x,t) = t$. Hence for $\nu$-almost every $I \in Q$, there exists $y_{I} \in \Sigma$ such that $I = \{y_{I}\} \times \mathbb{R}$.
	Then the map $I \mapsto y_{I}$ yields $(Q,\nu) = (\Sigma, m_{\Sigma})$ as measure spaces.
	
	(ii) Now fix $\theta \in (0,1)$ and let $A$ be a mimimizer of the isoperimetric problem with $m(A) = \theta$ and $m^{+}(A) = I_{(K,\infty,\infty)}(\theta)$. Taking the needle decomposition associated with $f=$1$_{A} - \theta$ and the induced splitting $M = \Sigma\times\mathbb{R}$ as in (i), we will show that there exists a half-space $I_{\theta} \subset \mathbb{R}$ such that $A = \Sigma \times I_{\theta}$.
	
	 Assume by contradiction that there exist Borel subsets $Q_{1}, Q_{2}$ of $\Sigma$ with $m_{\Sigma}(Q_{1}) = 1 - m_{\Sigma} (Q_{2}) \in (0, 1)$, such that $A = A_{1} \cup A_{2}$ where $A_{1} := Q_{1} \times (-\infty, r_{\theta}]$ and $A_{2} := Q_{2} \times [\bar{r_{\theta}}, \infty)$.
	 Notice that 
	 \begin{align*}
	m(A^{\epsilon}) - m(A)
	\geq &m(Q_{1} \times [r_{\theta}, r_{\theta} + \epsilon]) + m(Q_{2} \times [\bar{r_{\theta}} - \epsilon, \bar{r_{\theta}}]) \\
     &  + m( (Q_{1}^{\epsilon}\setminus Q_{1})\times (-\infty, r_{\theta}]) + m((Q_{2}^{\epsilon}\setminus Q_{2})\times [\bar{r_{\theta}},\infty)).  
	\end{align*}
	On the one hand,
    \begin{equation}
	   \lim_{\epsilon \downarrow 0} \frac{m(Q_{1} \times [r_{\theta}, r_{\theta} + \epsilon])}{\epsilon} = m_{\Sigma}(Q_{1}) I_{(K,\infty,\infty)}(\theta) ,
	\end{equation} 
	\begin{equation}
	\lim_{\epsilon \downarrow 0} \frac{m(Q_{2} \times [\bar{r_{\theta}} - \epsilon, \bar{r_{\theta}}])}{\epsilon} = m_{\Sigma}(Q_{2}) I_{(K,\infty,\infty)}(\theta).
	\end{equation} 
	On the other hand,
	\begin{align*}
	   \liminf_{\epsilon \downarrow 0} \frac{m( (Q_{1}^{\epsilon}\setminus Q_{1})\times (-\infty,r_{\theta}])}{\epsilon} &= \theta\liminf_{\epsilon \downarrow 0} \displaystyle\frac{m_{\Sigma}(Q_{1}^{\epsilon}) - m_{\Sigma}(Q_{1})}{\epsilon} = \theta m_{\Sigma}^{+}(Q_{1}) \\ & \geq \theta I_{(K,\infty,\infty)}(m_{\Sigma}(Q_{1})) > 0.
	\end{align*}
	Hence $m^{+}(A) > I_{(K,\infty,\infty)}(\theta)$ contradicting the hypothesis. 
	}
	\end{proof}
	
	\begin{remark}
	{
	Since the validity of needle decompositions on RCD$(K,\infty)$ spaces is still unknown, the method we used in this paper can not be extended to that setting. However, the isoperimetric inequality on RCD$(K,\infty)$-spaces was proved in \cite{AM} with the help of the $\Gamma$-calculus and Theorem \ref{th:poincare} was extended to RCD($K,\infty$)-spaces in \cite{GKKO}. 
	} 
	\end{remark}

	\section{Rigidity for isoperimetric inequality of negative effective dimension}
	
    We next consider the case of $N < -1$. Thanks to Theorem \ref{th:mai}, we can again apply the rigidity of the Poincar\'e inequality. Although the structure of the proof is the same as the case $N= \infty$, there will be several technical difficulties for $N < -1$ and we need more delicate arguments. Firstly, we need to check the equality case of the isoperimetric inequality on $1$-dimensional manifolds to use the needle decomposition method. We use the following lemma to make sure that minimizer sets are half-spaces.
    
    \begin{lemma} \label{lem:minizmizer}
	{
	Let $m = e^{-\psi}dx$ be a strictly log-concave (in the sense that $\psi$ is strictly convex) symmetric probability measure on $\mathbb{R}$. Fix $\theta \in (0,1)$ and if $E$ satisfies $m(E) = \theta$ and $m^{+}(E) = I_{(\mathbb{R},|\cdot|,m)}(\theta)$ then $E$ is necessarily of the form $(-\infty,a]$ or $[b,\infty)$.
	}
	\end{lemma}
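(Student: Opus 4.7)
The plan is to reduce the lemma to a first-variation analysis, exploiting the strict log-concavity of $m$ (equivalently, the strict monotonicity of $\psi'$) to sharply restrict the structure of minimizers.

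By Lemma \ref{lem:logconc} and the symmetry of $m$, the infimum $I_{(\mathbb{R},|\cdot|,m)}(\theta)$ is attained by half-lines, so $I_{(\mathbb{R},|\cdot|,m)}(\theta) = e^{-\psi(c_{\theta})}$ where $c_{\theta}$ is defined by $m((-\infty, c_{\theta}]) = \theta$. Let $E$ be any minimizer. Finiteness of $m^{+}(E)$ together with continuity and positivity of $e^{-\psi}$ implies that the essential boundary $\partial^{\ast} E$ is locally finite in $\mathbb{R}$, so modulo $m$-null sets $E$ decomposes as a disjoint union of intervals whose endpoints form $\partial^{\ast} E$.

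I would next apply a first-variation argument at each boundary point. Perturbing a left endpoint $a$ of a component (with $E$ lying to the right) produces first-order changes $\delta m(E) = -e^{-\psi(a)}\delta a$ and $\delta m^{+}(E) = -\psi'(a) e^{-\psi(a)}\delta a$; likewise at a right endpoint $b$ one has $\delta m(E) = e^{-\psi(b)}\delta b$ and $\delta m^{+}(E) = -\psi'(b) e^{-\psi(b)}\delta b$. Since $E$ minimizes $m^{+}(\cdot)$ under the mass constraint, there exists a Lagrange multiplier $\lambda \in \mathbb{R}$ with $\psi'(a) = \lambda$ at every left endpoint and $\psi'(b) = -\lambda$ at every right endpoint. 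Strict convexity of $\psi$ makes $\psi'$ strictly increasing, so each equation has at most one solution. Hence $\partial^{\ast} E$ has at most two points, and the evenness of $\psi$ (so $\psi'$ is odd) combined with $\psi'(a) = -\psi'(b)$ forces $b = -a$ in the two-endpoint cases. The remaining candidates for $E$ are a half-line $(-\infty, b]$ or $[a, \infty)$, a symmetric bounded interval $[-a, a]$, or its complement $(-\infty, -a] \cup [a, \infty)$.

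To rule out the bounded interval case, I would parameterize the family of bounded intervals of mass $\theta$ by their left endpoint $s \in (-\infty, -c_{\theta})$: write $J(s) = [s, b(s)]$ with $m(J(s)) = \theta$. Differentiating the mass constraint gives $b'(s) = e^{\psi(b(s)) - \psi(s)}$, and then
\[ P(s) := e^{-\psi(s)} + e^{-\psi(b(s))}, \qquad P'(s) = -e^{-\psi(s)}\bigl[\psi'(s) + \psi'(b(s))\bigr]. \]
Oddness and strict monotonicity of $\psi'$ show $P'(s) = 0$ only at the symmetric configuration $b(s) = -s$, and a direct computation yields $P''(s^{\ast}) = -2 e^{-\psi(a)} \psi''(a) < 0$ there, so this critical point is a strict local maximum. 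Since $P(s) \to e^{-\psi(c_{\theta})}$ at both parameter endpoints $s \to -\infty$ and $s \to -c_{\theta}$ (where $J(s)$ degenerates to a half-line), the uniqueness of the critical point together with equal boundary limits forces $P(s) > e^{-\psi(c_{\theta})} = I_{(\mathbb{R},|\cdot|,m)}(\theta)$ throughout the interior, contradicting minimality of $E$. The complement case reduces to this one via $m^{+}(E) = m^{+}(E^{c})$ and the symmetry $I_{(\mathbb{R},|\cdot|,m)}(\theta) = I_{(\mathbb{R},|\cdot|,m)}(1-\theta)$, applied to the bounded complement $E^{c}$ of mass $1-\theta$.

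The main obstacle in this plan is the rigorous derivation of the Euler--Lagrange conditions at every essential boundary point: one must ensure each perturbation is genuinely admissible, and argue that if $\partial^{\ast} E$ has more than two points one can combine perturbations at a suitable pair of boundary points to strictly decrease perimeter while preserving mass. The strict monotonicity of $\psi'$ supplied by the strict log-concavity hypothesis is precisely what delivers the strictness in this local improvement.
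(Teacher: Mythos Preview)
Your argument is correct and shares its computational core with the paper's proof: both hinge on sliding an interval while preserving mass and tracking the sign of the perimeter derivative, which in your notation is $P'(s)=-e^{-\psi(s)}[\psi'(s)+\psi'(b(s))]$, and both exploit that $\psi'$ is odd and strictly increasing. The organization differs. The paper never invokes a Lagrange multiplier or a structure theorem for finite-perimeter sets; instead it shows directly that right-shifting an interval with $a+b\ge 0$ (and left-shifting one with $a+b\le 0$) strictly decreases $m^{+}$, then uses this to merge any pair of components, reducing $E$ to a half-line or the complement of an interval, and finally disposes of the complement by passing to $E^{c}$ and shifting again. Your route---first variation to force at most one left and one right endpoint, then the sliding family $J(s)$ to eliminate the symmetric bounded interval---is more systematic and makes the role of strict convexity very transparent, at the cost of needing the (easy, in one dimension) reduction to a locally finite union of intervals. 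Two small clean-ups: the second-derivative computation $P''(s^{\ast})=-2e^{-\psi(a)}\psi''(a)$ is both unnecessary and not guaranteed by mere strict convexity (which does not force $\psi''>0$ pointwise); your sign analysis of $P'$ already shows $\Phi(s)=\psi'(s)+\psi'(b(s))$ is strictly increasing with a unique zero at the symmetric point, so $P$ strictly increases then strictly decreases and hence exceeds its common endpoint limit $e^{-\psi(c_{\theta})}$ throughout. Likewise, since the lemma only assumes strict convexity, $\psi$ need not be $C^{1}$; the paper handles this by working with $\psi'_{+}$ and $\psi'_{-}$, and your first-variation step goes through unchanged with one-sided derivatives.
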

	
	\begin{proof}
	{ We can adopt the proof for Gaussian measures in Theorem 3.1 in \cite{CFMP} to the setting of strictly log-concave and symmetric measures.
	Note that $\psi'_{+}$ and $\psi'_{-}$ are monotonically increasing functions and by the symmetry of $m$ (hence $\psi$), we can deduce that $\psi'_{+}(x) + \psi_{+}'(y) > 0$ (resp. $\psi_{-}'(x) + \psi_{-}'(y) < 0$) if $x+y > 0$ (resp. $x+y < 0$).
	
	We call $(a',b')$ a \emph{right-shifted} (resp. \emph{left-shifted}) of an interval $(a,b)$ for $a+b \geq 0$ (resp. $a+b \leq 0$) if $a' > a$ (resp. $a' < a)$ and $m((a',b')) = m((a,b))$. We will show that $m^{+} ((a',b')) < m^{+}((a,b))$ if ($a',b'$) is a right-shifted (resp. left-shifted) of ($a,b$) for $a+b \geq 0$ (resp. $a+b \leq 0$).
	
	Let $a+b \geq 0$. Define $g(\epsilon)$ such that $(a+\epsilon, b+g(\epsilon))$ is a right-shifted of $(a,b)$.
	By definition of $g(\epsilon)$,
	\[ \int_{a}^{b} e^{-\psi(t)}dt = \int_{a+\epsilon}^{b+g(\epsilon)} e^{-\psi(t)}dt. \]
	Hence \[g'(\epsilon)e^{-\psi(b+g(\epsilon))}=e^{-\psi(a+\epsilon)}.\]
	Therefore
	\begin{align*}{
	\frac{d^{+}}{d\epsilon}m^{+}(a+\epsilon,b+g(\epsilon)) &= -\psi'_{+}\big(b+g(\epsilon)\big)g'(\epsilon)e^{-\psi(b+g(\epsilon))} - \psi'_{+}(a+\epsilon)e^{-\psi(a+\epsilon)} \\ &= -e^{-\psi(a+\epsilon)}(\psi_{+}'\big(b+g(\epsilon))+\psi_{+}'(a+\epsilon)\big)< 0.
	}\end{align*}
	The case $a+b \leq 0$ can be proved analogously. 
	
	Hence for a set $F$ of the form $(a,b)\cup(c,d)$ with $a<b<c<d$ and $a+b \geq 0$ (resp. $c+d \leq 0$), we can construct a set $F' = (a',d)$ (resp. $F' = (a,d')$) such that $m(F') = m(F)$ and $m^{+}(F') < m (F)$ by taking $a'$ (resp. $d'$) such that $(a',c)$ is a right-shifted of $(a,b)$ (resp. ($b,d'$) is a left-shifted of $(c,d)$). 
	
	Now let $E = \bigcup_{h \in H} (a_{h},b_{h})$ ($-\infty \leq a_{h} < b_{h} \leq \infty$ and the intervals $(a_{h},b_{h})_{h\in H}$ are mutually disjoint) be a minimizer set of the isoperimetric problem, that is  $m(E) = \theta$ and $m^{+}(E) = I_{(\mathbb{R},|\cdot|,m)}(\theta)$. By repeating the shifting process as above (right-shifted if $a_{h} + b_{h} \geq 0$ and left-shifted if $a_{h} + b_{h} \leq 0$), we can show that $E$ is necessarily of the form $(-\infty,a]$ or $[b,\infty)$ or $(-\infty,a]\cup[b,\infty)$.
	
	Suppose $E = (-\infty,a]\cup[b,\infty)$. Let $E_{1} = \mathbb{R} \setminus E = (a,b)$, we have $m(E_{1}) = 1-\theta$ and $m^{+} (E_{1}) = m^{+}(E)$. By taking left-shifted or right-shifted of $E_{1}$, we find a half-line $E_{2}$ such that $m(E_{2}) = m(E_{1}) = 1 -\theta$ and $m^{+} (E_{2}) < m^{+}(E)$.  Let $E_{3} = \mathbb{R} \setminus E_{2}$, we have $m(E_{3}) = \theta$ and $m^{+} (E_{3}) < m^{+}(E)$. This is a contradiction. Hence $E$ is necessarily of the form $(-\infty,a]$ or $[b,\infty)$.
	}
	\end{proof}

	\begin{lemma}($1$-dimensional case)\label{lem:1dim-neg}
	{
	Let $(M,g,m)$ be a weighted Riemannian manifold of dimension $1$ satisfying $\emph{Ric}_{N} \geq K > 0$ for $N < -1$ and suppose $m(M) = 1$. Assume that there exists $\theta_{0} \in (0,1)$ with $I_{(M,g,m)}(\theta_{0}) = I_{(K,N,\infty)}(\theta_{0})$. Then 
	
	\[ (M,g,m) = \bigg( \mathbb{R},|\cdot|, m_{K,N}^{-1} \cosh^{N-1}\bigg( {\sqrt{\frac{K}{1-N}}}x\bigg) dx \bigg) \] 
	as weighted manifolds where $m_{K,N} = \int_{-\infty}^{\infty}\cosh^{N-1}\bigg( {\sqrt{\frac{K}{1-N}}}x\bigg)dx$. Moreover, if a Borel set $A$ satisfies $m^{+}(A) = I_{(K,N,\infty)}(m(A))$ then $A$ is $(-\infty,a]$ or $[b,\infty)$ up to a difference of an $m$-negligible set.
	}
	\end{lemma}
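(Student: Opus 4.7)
The plan mirrors the proof of Lemma \ref{lem:1dim} but replaces $K$-convexity of $\psi$ with the equivalent one-dimensional characterization of $\mathrm{Ric}_{N}\geq K$ for $N<-1$. First I would establish that $M$ must be non-compact, hence isometric to $\mathbb{R}$: an analog of Lemma \ref{lem:1dim1}, obtained by comparing the three terms $K_{1,D},K_{2,D},K_{3,D}$ entering $I_{(K,N,D)}$ against $I_{(K,N,\infty)}$, should yield $I_{(K,N,D)}(\theta_{0}) > I_{(K,N,\infty)}(\theta_{0})$ for every $D<\infty$, so Theorem \ref{th:iso} together with the equality hypothesis forces $\mathrm{diam}(M)=\infty$.

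Once $M \cong \mathbb{R}$, write $m = \rho\,dx$ and introduce $u := \rho^{1/(N-1)} > 0$. A short computation shows that in dimension one $\mathrm{Ric}_{N}\geq K$ is equivalent to the ordinary differential inequality $u''\geq \sigma u$ with $\sigma = K/(1-N)$, and that the model density $\cosh^{N-1}(\sqrt{\sigma}x)$ corresponds precisely to the equality case $u'' = \sigma u$. I would then revisit Milman's proof of the 1-dimensional isoperimetric inequality in \cite{Mil2}. For a half-line candidate $A = (-\infty,a]$, its core is a Sturm-type inequality $u(a+t) \geq u(a)\cosh(\sqrt{\sigma}t) + (u'(a)/\sqrt{\sigma})\sinh(\sqrt{\sigma}t)$, which, after raising to the $(N-1)$st power (noting that $N-1<0$ flips the inequality) and integrating over the maximal interval of positivity, bounds $m([a,\infty))$ and $m((-\infty,a])$ in terms of $u(a)$ and $u'(a)/u(a)$; optimizing over $u'(a)/u(a)$ recovers $\rho(a) \geq I_{(K,N,\infty)}(\theta)$. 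Equality at $\theta_{0}$ forces $u'' = \sigma u$ identically; the general solution $u(x) = \alpha\cosh(\sqrt{\sigma}x) + \beta\sinh(\sqrt{\sigma}x)$, combined with $\int u^{N-1}\,dx = 1 < \infty$ (which, since $N-1<0$, demands $u\to\infty$ at both $\pm\infty$), rules out sinh-type and pure exponential solutions and singles out $u(x) = C\cosh(\sqrt{\sigma}(x-x_{0}))$ for some $x_{0}\in\mathbb{R}$ and $C>0$. Absorbing $x_{0}$ into an isometry of $\mathbb{R}$ and $C$ into the normalization of $m$ yields the claimed measure.

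For the second assertion, the model measure is symmetric and strictly log-concave: its potential $\psi = -(N-1)\log\cosh(\sqrt{\sigma}x)$ satisfies $\psi'' = (1-N)\sigma\,\mathrm{sech}^{2}(\sqrt{\sigma}x) > 0$. Hence Lemma \ref{lem:minizmizer} applies and yields that every isoperimetric minimizer is $(-\infty,a]$ or $[b,\infty)$ up to an $m$-negligible set. The main technical obstacle I expect is the equality analysis through Milman's argument: unlike the pointwise $K$-convexity inequality used in the $N=\infty$ case, the Sturm-type comparison is coupled with an optimization over the initial slope $u'(a)/u(a)$, and one must carefully propagate equality through both steps to obtain $u''=\sigma u$ pointwise. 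A secondary subtlety is that Bobkov's Lemma \ref{lem:logconc} is not directly available at the outset to reduce a general Borel minimizer to a half-line, since $m$ need not be a priori log-concave under $\mathrm{Ric}_{N}\geq K$ for $N<-1$; one must either argue the reduction by other means or run the Sturm comparison on each connected component of a general minimizer, with log-concavity only emerging a posteriori once $u$ has been pinned down.
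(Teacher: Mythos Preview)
Your outline matches the paper's strategy almost exactly: the same reduction to $M\cong\mathbb{R}$ via the strict inequality $I_{(K,N,D)}>I_{(K,N,\infty)}$, the same transformation $u=\rho^{1/(N-1)}=e^{\psi/(1-N)}$ (the paper's $f_{N}$) turning $\mathrm{Ric}_{N}\ge K$ into $u''\ge\sigma u$, the same Sturm-type comparison (written in the paper as $e^{-\psi(x+t)}\le e^{-\psi(x)}J_{H}(t)$ with $J_{H}(t)=(\cosh(\sqrt{\sigma}t)+\tfrac{H}{(N-1)\sqrt{\sigma}}\sinh(\sqrt{\sigma}t))_{+}^{N-1}$), and the same appeal to Lemma~\ref{lem:minizmizer} for the final half-line assertion once the measure is identified.

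The one point you flag as a ``secondary subtlety'' is in fact the crux, and your suggested fix (``run the Sturm comparison on each connected component'') does not by itself close the gap. If a general minimizer $A$ has several boundary points $x\in\partial A$, the comparison at each $x$ involves the slope $H(x)=\pm\psi'_{\pm}(x)$, and there is no a priori reason these agree; with different $H$-values the resulting bounds do not combine to the sharp constant, and the equality analysis stalls. The paper resolves this with a variational argument borrowed from the proof of Lemma~\ref{lem:minizmizer}: for any connected component $(a,b)$ of $A$, right- and left-shifting while preserving mass shows (since $A$ is a minimizer) that $\psi'_{+}(b)+\psi'_{+}(a)\le 0$ and $\psi'_{-}(b)+\psi'_{-}(a)\ge 0$, and together with $\psi'_{-}\le\psi'_{+}$ (from $(K,N{-}1)$-convexity) this forces $\psi$ to be differentiable at $a,b$ with $\psi'(b)=-\psi'(a)$. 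Applying the same reasoning to components of $M\setminus A$ propagates the equality and shows $H(x)$ is one common constant $H$ on all of $\partial A$. Only then does summing the comparison over $\partial A$ give $m(A^{r})-m(A)\le m^{+}(A)\int_{0}^{r}J_{H}(t)\,dt$ and its companion for $M\setminus A$, after which your optimization over $H$ and the equality analysis go through exactly as you describe.
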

	
	\begin{proof}
	{
	We will prove $I_{(K,N,D)}(\theta_{0}) > I_{(K,N,\infty)}(\theta_{0})$ in the appendix. Hence $M$ is noncompact and isometric to $\mathbb{R}$. We now recall the proof of the isoperimetric inequality in \cite{Mil1,Mil2}.

	Since $M$ is $1$-dimensional and $\mathrm{Ric}_{N} \geq K$, $\psi$ satisfies the $(K,N-1)$-convexity condition in \cite{Oneg}: $\psi'' - \frac{(\psi')^{2}}{N-1} \geq K$ in the weak sense. Hence by \cite[(2.5)]{Oneg}
	
	\begin{equation} \label{equ:key-neg1}
	{e^{-\psi(x+t)} \leq e^{-\psi(x)}J_{\psi'_{+}(x)}}(t) \qquad (t>0),
	\end{equation} and
	
	\begin{equation} \label{equ:key-neg2}
	{e^{-\psi(x+t)} \leq e^{-\psi(x)}J_{-\psi'_{-}(x)}}(t) \qquad (t<0).
	\end{equation}
	Here we put $\sigma := K/(1-N)$ and denote $\big( \cosh(\sqrt{\sigma} t) + \frac{H \sinh(\sqrt{\sigma}t)}{(N-1)\sqrt{\sigma}} \big)_{+}^{N-1}$ by $J_{H}(t)$.
	Let $A$ be a minimizer for the isoperimetric problem. We may  assume without loss of generality that $M\setminus A$ does not include isolated points, since those will not influence $m(A)$ and $m^{+}(A)$. We have $m^{+}(A) = \sum_{x \in \partial A} e^{-\psi(x)}$. We will show that for all $x \in \partial A$, $\psi(x)$ is differentiable and $H(x)$ is constant where $H(x)$ denotes $\psi' (x)$ (resp. $-\psi' (x)$) if $x$ is a right (resp. left) boundary point. 
	
	Let $(a,b)$ be a connected component of $A$ and $g(\epsilon)$ is defined as in Lemma \ref{lem:minizmizer} to construct a right-shifted of $(a,b)$. Recall  \[g'(\epsilon)e^{-\psi(b+g(\epsilon))}=e^{-\psi(a+\epsilon)},\]
	and
    \[
	\frac{d^{+}}{d\epsilon}m^{+}(a+\epsilon,b+g(\epsilon)) = -e^{-\psi(a+\epsilon)}(\psi_{+}'\big(b+g(\epsilon))+\psi_{+}'(a+\epsilon)\big)
	\] which is nonnegative by the isoperimetric inequality.
	Letting $\epsilon$ go to $0$, we obtain $\psi'_{+}(b) + \psi'_{+} (a) \leq 0$. Using a similar argument to the left-shifted of $(a,b)$ implies $\psi'_{-}(b) + \psi'_{-} (a) \geq 0$. The $(K,N)$-convexity implies that $\psi'_{+} \geq \psi'_{-}$, hence $\psi'_{+}(b) = -\psi'_{-} (a) = \psi'_{-}(b) = -\psi'_{+} (a)$. Note that $M\setminus A$ is the isoperimetric minimizer of parameter $1-\theta$, we can conclude that $H(x)$ is a constant and we write $H(x)$ by $H$.
	
	From \eqref{equ:key-neg1} and \eqref{equ:key-neg2}
	\[ m(A^{r}) - m(A)  \leq m^{+} (A)\int_{0}^{r} J_{H}(t)dt. \]
	Letting $r$ go to $\infty$, we have
	\begin{equation}\label{equ:infneg}
	1 - \theta \leq m^{+}(A) \int_{0}^{\infty}  J_{H}(t)dt.
	\end{equation}
	Using a similar argument for $M\setminus A$, we also obtain
	
	\begin{equation}\label{equ:minusinfneg}
	\theta \leq m^{+}(A) \int_{0}^{\infty} J_{-H}(t)dt = m^{+}(A) \int_{-\infty}^{0} J_{H}(t)dt.
	\end{equation}
	Hence
	\begin{equation}\label{equ:maxmin}
	I_{(M,g,m)} (\theta)\geq \inf_{H\in \mathbb{R}} \max\bigg\{\frac{1-\theta}{\int_{0}^{\infty} J_{H}(t) dt}, \frac{\theta}{\int_{-\infty}^{0} J_{H}(t)dt}\bigg\}.
	\end{equation} When $H$ varies from $-\infty$ to $\infty$, the first (resp. second) term in the right hand side of (\ref{equ:maxmin}) varies monotonically from $\infty$ to $0$ (resp. $0$ to $\infty$). Therefore the infimum over $H\in \mathbb{R}$ is attained at the unique point $H_{\theta}$ such that both terms coincide:
	
	\begin{equation}
	\frac{\int_{0}^{\infty} J_{H_{\theta}}(t) dt}{1-\theta} = \frac{\int_{-\infty}^{0} J_{H_{\theta}}(t)dt}{\theta} = \int_{-\infty}^{\infty} J_{H_{\theta}}(t)dt.
	\end{equation} 
	Put $\beta = \frac{H_{\theta}}{(N-1)\sqrt{\sigma}}$ and $\alpha = \tanh^{-1}(\beta) \in \mathbb{R}$.
	Since $J_{H_{\theta}}(t)$ is integrable, we indeed have $|\beta| < 1$ (see \cite[\S 4]{Mil2}) and 
	
	\[ J_{H_{\theta}}(t) = \frac{\cosh ^{N-1} (\alpha + \sqrt{\sigma}t)}{\cosh^{N-1}(\alpha)}. \]
	Hence
	
	\[ \theta = \frac{\int_{-\infty}^{0}\cosh ^{N-1} (\alpha + \sqrt{\sigma}t)dt}{\int_{-\infty}^{\infty}\cosh ^{N-1} (\alpha + \sqrt{\sigma}t)dt} = \frac{\int_{-\infty}^{\alpha/\sqrt{\sigma}}\cosh ^{N-1} (\sqrt{\sigma}s)ds}{\int_{-\infty}^{\infty}\cosh ^{N-1} (\sqrt{\sigma}s)ds}.\]
	Thus $c(\theta) = \frac{\alpha}{\sqrt{\sigma}}$ and
	
	\begin{align*} I_{(M,g,m)} (\theta) &\geq \bigg( \int_{-\infty}^{\infty}\frac{\cosh ^{N-1} (\alpha + \sqrt{\sigma}t)}{\cosh^{N-1}(\alpha)}dt \bigg)^{-1} = \frac{\cosh^{N-1}(\sqrt{\sigma}c(\theta))}{{\int_{-\infty}^{\infty}\cosh ^{N-1} (\sqrt{\sigma}s)ds}} \\&= I_{K,N,\infty} (\theta). \end{align*}
	
	We consider the case of equality. Let $f_{N} = e^{\psi/(1-N)}$. By section 2.1 in \cite{Oneg}, the equality of \eqref{equ:key-neg1} and \eqref{equ:key-neg2} imply that $f_{N}'' = \frac{K}{1-N} f_{N} (x)$. Hence $f_{N} = a \cosh (\sqrt{\sigma}x) + b\sinh(\sqrt{\sigma}x)$. Since $e^{-\psi} = f_{N}^{N-1}$ is positive and integrable, we have $a>0$ and $|b/a| < 1$. Put $\gamma = \tanh^{-1} (b/a)$ and $k = a/\cosh{\gamma}$, then $e^{-\psi}(x) = (k \cosh(\gamma + \sqrt{\sigma}x))^{N-1}$.  Therefore $(M,g,m) = \bigg( \mathbb{R},|\cdot|, m_{K,N}^{-1}\cosh^{N-1}\big( {\sqrt{\sigma}x}\big) \,dx \bigg)$ as weighted manifolds.  
	
	Put $h(x) = - \log (\cosh^{N-1}({\sqrt{\sigma}}x))$. We have
	\[ h''(x) = K \frac{1}{\cosh^{2} ({\sqrt{\sigma}}x)} > 0. \]
	Therefore $m$ is a strictly log-concave measure. Note that it is also symmetric. Hence by Lemma \ref{lem:minizmizer}, if a Borel set $A$ satisfies $m^{+}(A) = I_{(K,N,\infty)}(m(A))$ then $A = (-\infty,a]$ or $[b,\infty)$.
	}
	\end{proof}
	
	Now we turn to the main theorem. 
	
	\begin{theorem}(High dimensional case)\label{th:ndimneg}
	{
	Let $(M,g,m)$ be a weighted Riemannian manifold of dimension $n \geq 2$ satisfying $\emph{Ric}_{N} \geq K > 0$ for $N < -1$ and suppose $m(M) = 1$. Assume that there exists $\theta_{0} \in (0,1)$ with $I_{(M,g,m)}(\theta_{0}) = I_{(K,N,\infty)}(\theta_{0})$. Then the following hold:
	\begin{enumerate}[{\rm (i)}]
	        \item $M$ is isometric to the warped product
			\[ \mathbb{R} \times_{\cosh(\sqrt{K/(1-N)}t)} \Sigma
			=\bigg( \mathbb{R} \times \Sigma,
			 dt^2 +\cosh^2\bigg( \sqrt{\frac{K}{1-N}}t \bigg) \cdot g_{\Sigma} \bigg) \]
			and the measure $m$ is written through the isometry as
			\[ m(dtdx) =\cosh^{N-1}\bigg( \sqrt{\frac{K}{1-N}}t \bigg) \,dt \,m_{\Sigma}(dx), \]
			where $\Sigma^{n-1}$ is an $(n-1)$-dimensional weighted Riemannian manifold with $\mathrm{Ric}_{N-1} \geq K(2-N)/(1-N)$.
			\item If a Borel set $A$ satisfies $m^{+}(A) = I_{(K,N,\infty)}(m(A))$ then $A$ is a half-space of $M$ (the precise meaning will be given in the proof). 
		\end{enumerate}
		
	}
	\end{theorem}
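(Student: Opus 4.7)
The plan is to follow the architecture of Theorem \ref{th:ndim}, replacing the Gaussian Poincar\'e rigidity (Theorem \ref{th:poincare}) by the warped-product rigidity of Theorem \ref{th:mai}, and using $v := \sinh(\sqrt{\sigma}\,u)$, where $\sigma := K/(1-N)$, in place of the guiding function $u$ itself as the candidate first eigenfunction.

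For part (i), I would take an isoperimetric minimizer $A_0$ with $m(A_0) = \theta_0$ and $m^{+}(A_0) = I_{(K,N,\infty)}(\theta_0)$ and apply the needle decomposition (Theorem \ref{th:Ndl}) to $f := 1_{A_0} - \theta_0$, obtaining a 1-Lipschitz guiding function $u$, a partition $Q$, a measure $\nu$, and conditional probabilities $\{\mu_I\}_{I \in Q}$. Equality in the isoperimetric inequality propagates through \eqref{equ:orthogonal} to equality in the 1-dimensional isoperimetric inequality on $\nu$-a.e.\ needle, so by Lemma \ref{lem:1dim-neg} every such needle is isometric to $(\mathbb{R},|\cdot|, m_{K,N}^{-1}\cosh^{N-1}(\sqrt{\sigma}x)\,dx)$, and $A_0 \cap I$ is the corresponding canonical half-line. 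A direct computation on this 1-dimensional model shows that $x \mapsto \sinh(\sqrt{\sigma}x)$ is a first eigenfunction of the Witten Laplacian with eigenvalue $\lambda_1 = NK/(N-1)$, orthogonal to constants by oddness.

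With this pointwise input I would run the Fubini-Jensen computation of Theorem \ref{th:ndim} on $v = \sinh(\sqrt{\sigma}\,u)$ rather than $u$. Since $u$ is 1-Lipschitz and each needle is a transport ray, $|\nabla u| = 1$ almost everywhere and $|\nabla v|^{2} = \sigma\cosh^{2}(\sqrt{\sigma}\,u)$. The aim is to establish
\[
\int_M v^{2}\,dm - \bigg(\int_M v\,dm\bigg)^{2}
\;\ge\; \frac{1}{\lambda_1}\int_M |\nabla v|^{2}\,dm,
\]
which, combined with the Poincar\'e inequality of Theorem \ref{th:mai}, forces equality and identifies $v - v_M$ as a first eigenfunction for $\lambda_1$; Theorem \ref{th:mai} then yields the warped product claimed in (i). Part (ii) proceeds in parallel with Theorem \ref{th:ndim}(ii): using the splitting $M = \mathbb{R} \times_{\cosh(\sqrt{\sigma}t)} \Sigma$ from (i), one performs a second needle decomposition of a general minimizer $A$, argues that the needles must align with the $\mathbb{R}$-factor, and concludes that each slice $A \cap (\mathbb{R} \times \{x\})$ is a canonical half-line. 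If their orientations were inconsistent across $\Sigma$, i.e.\ $A = ((-\infty, r_\theta] \times Q_1) \cup ([\bar r_\theta, \infty) \times Q_2)$ with $m_\Sigma(Q_1), m_\Sigma(Q_2) > 0$, then the two lateral boundary contributions, weighted by $\cosh^{N-1}(\sqrt{\sigma}t)$ along $\mathbb{R}$, add a strictly positive excess to $m^{+}(A)$, contradicting $m^{+}(A) = I_{(K,N,\infty)}(\theta)$.

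The main obstacle is the Fubini-Jensen step in part (i). In the Gaussian case, $\mathrm{Var}_{\mu_I}(u) = 1/K$ on every needle independently of the additive constant relating $u|_I$ to the canonical arclength coordinate, since $u$ is affine and variance is shift-invariant; the same identity fails for $v = \sinh(\sqrt{\sigma}u)$ because $\sinh(\sqrt{\sigma}(x+c))$ is no longer an eigenfunction of the 1-dimensional Witten Laplacian once $c \neq 0$. Resolving this requires showing that $u$ can be globally normalized by a single additive constant so that $u = 0$ meets the symmetry point of $\mu_I$ on $\nu$-a.e.\ needle, and I expect this alignment to follow from pinning $A_0$ as a sublevel set $\{u \le r_0\}$ of the guiding function: the 1-dimensional rigidity of Lemma \ref{lem:1dim-neg} forces $A_0 \cap I$ to be the canonical half-line of $\mu_I$ determined by $\theta_0$, which fixes the offset between $u|_I$ and the canonical coordinate uniformly in $I$.
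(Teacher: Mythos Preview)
Your architecture matches the paper's: needle decomposition of a minimizer $A_0$, one-dimensional rigidity via Lemma~\ref{lem:1dim-neg} on each needle, construction of $v = \sinh(\sqrt{\sigma}(u - \text{const}))$ as a candidate first eigenfunction, and application of Theorem~\ref{th:mai}. You also correctly isolate the central difficulty: unlike the Gaussian case, where $u - u_I$ is automatically a first eigenfunction on each needle regardless of the additive constant, $\sinh(\sqrt{\sigma}(t+c_I))$ is not an eigenfunction of the model needle unless $c_I = 0$, so the Fubini--Jensen chain $\mathrm{Var}_M(v) \ge \int_Q \mathrm{Var}_I(v)\,d\nu$ cannot be combined with the needlewise Poincar\'e inequality (which points the other way) unless the latter is already an equality on each needle.

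The gap is in your proposed resolution. Asserting that $A_0$ is a sublevel set $\{u \le r_0\}$ of the guiding function is not justified by Theorem~\ref{th:Ndl}: $u$ is a Kantorovich potential for the transport of $(1_{A_0} - \theta_0)\,m$, and nothing in Klartag's construction forces $A_0$ to coincide with a sublevel set of $u$; on each needle the rigidity gives $A_0 \cap I = (-\infty, r_{\theta_0}]$ or $[\bar r_{\theta_0}, \infty)$ in the canonical coordinate, but the value of $u$ at the boundary point, and even the orientation of the half-line relative to $u$, could vary with $I$. The paper fills this gap by a different geometric argument. It invokes regularity of isoperimetric minimizers so that $\partial_r A_0$ is a $C^\infty$ hypersurface (away from a small singular set) with outer normal $n$, and then argues that equality in \eqref{equ:orthogonal} forces $\langle \nabla u, n\rangle = 1$ on $\partial_r A_0$: if $\partial_r A_0$ met the needles obliquely on a set of positive $\mathcal{H}^{n-1}$-measure, the ambient $\epsilon$-neighbourhood $A_0^\epsilon$ would strictly contain the union of the needlewise ones, giving $m^+(A_0) > \int_Q \mu_I^+(A_0)\,d\nu$. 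Perpendicularity makes $u$ constant along $\partial_r A_0$; since the canonical coordinate of the boundary point is fixed by $\theta_0$ alone, the offset $c_I$ is then uniform, so $u_I = u_M$ for $\nu$-a.e.\ $I$, $v_I = 0$ by oddness, and the variance computation becomes an exact equality with no Jensen step needed. Your sketch for part~(ii) is essentially the paper's, though the paper handles the warped metric more carefully, restricting to a compact $t$-interval so that the $\cosh(\sqrt{\sigma}t)$ scaling of $\Sigma$-distances is uniformly bounded when estimating the lateral boundary contribution.
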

	
	\begin {proof}
	{
	(i) Let $A_{0}$ be a Borel set on $M$ with $m(A_{0}) = \theta_{0}$ and $m^{+}(A_{0}) = I_{(K,N,\infty)}(\theta_{0})$. (See \cite{Mil1} for the existence of $A_{0}$). By Theorem 2.3 in \cite{Mil1}, we can choose $A_{0}$ such that the regular part $\partial_{r}A_{0}$ of the boudary is an open $C^{\infty}$-hypersurface up to Hausdorff codimension 7. Moreover, on $\partial_{r}A_{0}$, we can define the outward normal unit vector $n(x)$. 
	
	Let $u, Q,\nu,\{\mu_{I}\}_{I \in Q}$ be the elements of the needle decomposition associated with the function $f(x) = 1_{A_{0}}(x) - \theta_{0}$ as in Theorem \ref{th:Ndl}. On $\nu$-a.e. needle $I \in Q$, equality holds for the isoperimetric inequality $I_{(I,|\cdot|,\mu_{I})}(\theta_{0}) \geq I_{(K,N,\infty)}(\theta_{0})$. Moreover, by Lemma \ref{lem:1dim-neg}, we obtain 
	\[(I, |\cdot|, \mu_{I}) = \bigg(\mathbb{R}, |\cdot|, m_{K,N}^{-1} \cosh^{N-1}\bigg( {\sqrt{\frac{K}{1-N}}}x\bigg) dx \bigg) \] as weighted manifolds and $A_{0} \cap I = (-\infty, r_{\theta_{0}}]$ or $[\bar{r_{\theta_{0}}}, \infty)$ by passing to an isometry. By Corollary 2.28 in \cite{Kl}, $u$ can be chosen to be a $C^{1,1}$-function, it means $u$ is $C^{1}$ and $\nabla u$ is locally Lipschitz. By Lemma 10 in \cite{FM},  for any needle $I$ and a point $x\in I$, the function $u$ is differentiable at $x$ and $\nabla u(x)$ is a unit vector tangent to $I$. 
	
	Put $\sigma = K/(1-N)$. On each $I \in Q$, we have $|u(x)-u(y)| = d(x,y)$ hence we can consider $u(x)$ as the function $t \mapsto t+c$ for a constant $c$ on $\mathbb{R}$. Assume $\partial_{r}A_{0}$ is not perpendicular to $\nabla u$, precisely, the Hausdorff measure $H^{n-1}(\{ \langle \nabla u,n\rangle \leq 1-\rho\})$ is positive for some $\rho > 0$. It means the $\epsilon$-neighborhood of $A_{0}$ in $M$ contains a nonzero-measure sets that is not included in the union of $\epsilon$-neighborhoods of $A_{0}\cap I$ in $I$. Note that on $\nu$-a.e. $I$, $m(x) > 0$ for $x\in I$. Therefore the equality in \eqref{equ:orthogonal} can not hold or, in other words, $m^{+}(A_{0}) > I_{(K,N,\infty)}(\theta_{0})$, this is a contradiction. Hence $\partial_{r}A_{0}$ is perpendicular to $\nabla u$, and $c$ is constant on  $\nu$-a.e. $I$. Let $u_{I}$ stand for the mean value of $u$ on $(I,\mu_{I})$. Then $u_{I}$ is constant for $\nu$-a.e. $I$ and $u_{M} = u_{I}$ for $\nu$-a.e. $I$.
	
	Letting $v = \sinh (\sqrt{\sigma}(u - u_{M}))$, we will prove that $v$ gives the equality of the Poincar\'e inequality. Since $\nabla u(x)$ is a unit vector tangent to $I$, $|\nabla u| = 1$ and $|\nabla v| = \sqrt{\sigma}\cosh({\sqrt{\sigma}}(u-u_{M}))$. We have
	\[
	I_{1} := \int_{I} |\nabla v|^{2} d\mu_{I} =  m_{K,N}^{-1} \sigma\int_{\mathbb{R}} \cosh^{N+1}(\sqrt{\sigma}x)dx.
	\]
	Letting $v_{I}$ stand for the mean value of $v$ on $(I,\mu_{I})$, we have
	\begin{align*}
	    v_{I} &= \int_{I} \sinh (\sqrt{\sigma}(u-u_{M})) d\mu_{I} 
	          \\&= \int_{I} \sinh (\sqrt{\sigma}(u-u_{I})) d\mu_{I} 
	          \\&= m_{K,N}^{-1} \int_{\mathbb{R}} \sinh(\sqrt{\sigma}x)\cosh^{N-1}(\sqrt{\sigma}x)dx = 0.
	\end{align*}
	Hence
	\[I_{2} := \int_{I} (v-v_{I})^{2} d\mu_{I} =  m_{K,N}^{-1} \int_{\mathbb{R}}\sinh^{2}(\sqrt{\sigma}x)\cosh^{N-1}(\sqrt{\sigma}x)dx.\]
	By integration by parts,
	\begin{align*}
	I_{1} &= m_{K,N}^{-1}\sigma\int_{\mathbb{R}}\cosh(\sqrt{\sigma}x)\cdot\cosh^{N}(\sqrt{\sigma}x)dx \\&= -m_{K,N}^{-1}\sigma\int_{\mathbb{R}}\frac{\sinh(\sqrt{\sigma}x)}{\sqrt{\sigma}}\cdot N\sqrt{\sigma}\cosh^{N-1}(\sqrt{\sigma}x)\sinh(\sqrt{\sigma}x)dx
	\\&= -N\sigma I_{2} = \frac{KN}{N-1} I_{2}.
	\end{align*}
	Therefore
	\begin{align*}
	\int_{M}(v-v_{M})^{2}dm & = \int_{M}v^2dm - \bigg(\int_{M}vdm\bigg)^2 \\ & = \int_{Q}\int_{I}v^2d\mu_{I}d\nu - \bigg(\int_{Q}\big(\int_{I}vd\mu_{I}\big)d\nu\bigg)^2 \\ &= \int_{Q}\int_{I}v^2d\mu_{I}d\nu = \displaystyle\frac{N-1}{KN} \int_{Q}\int_{I}|\nabla v|^{2}d\mu_{I}d\nu\\
	&= \displaystyle\frac{N-1}{KN}\int_{M}|\nabla v|^{2}dm.
	\end{align*}
	Therefore $v$ satisfies the equality in the Poincar\'e inequality and hence becomes an eigenfunction for the first nonzero eigenvalue of the weighted Laplacian on $M$. By Theorem \ref{th:mai}, $M$ is isometric to the warped product \[\mathbb{R} \times_{\cosh(\sqrt{K/(1-N)}t)} \Sigma
			=\bigg( \mathbb{R} \times \Sigma,
			 dt^2 +\cosh^2\bigg( \sqrt{\frac{K}{1-N}}t \bigg) \cdot g_{\Sigma} \bigg), \] and the measure $m$ is written through the isometry as
			\[ m(dtdx) =\cosh^{N-1}\bigg( \sqrt{\frac{K}{1-N}}t \bigg) \,dt \,m_{\Sigma}(dx),\] 
			where $\Sigma^{n-1}  = \{v^{-1}(0)\}$ is an $(n-1)$-dimensional weighted Riemannian manifold.
	
	(ii) Now fix $\theta \in (0,1)$ and let $A$ be a mimimizer of the isoperimetric problem with $m(A) = \theta$ and $m^{+}(A) = I_{(K,N,\infty)}(\theta)$. Take the needle decomposition associated with $f=$1$_{A} - \theta$ as in (i), we will show that there exists a half-space $I_{\theta} \subset \mathbb{R}$ such that $A = I_{\theta} \times \Sigma$.
	
	 Assume by contradiction that there exist Borel subsets $Q_{1}, Q_{2}$ of $\Sigma$ with $m_{\Sigma}(Q_{1}) = 1 - m_{\Sigma} (Q_{2}) \in (0, 1)$, such that $A = A_{1} \cup A_{2}$ where $A_{1} := Q_{1} \times (-\infty, r_{\theta}]$ and $A_{2} := Q_{2} \times [\bar{r_{\theta}}, \infty)$.
	 Notice that for $(p,t),(q,t) \in \Sigma \times \mathbb{R}$, we have $d((p,t),(q,t)) = \cosh(\sqrt{\sigma}t) d_{\Sigma}(p,q)$. Fix $b \in (-\infty,r_{\theta})$ and $k$ the maximal value of $\cosh (\sqrt{\sigma}x)$ on $[b,r_{\theta}]$. Then
	 \begin{align*}
	m(A^{\epsilon}) - m(A)
	\geq &m(Q_{1} \times [r_{\theta}, r_{\theta} + \epsilon]) + m(Q_{2} \times [\bar{r_{\theta}} - \epsilon, \bar{r_{\theta}}]) \\
     &  + m( (Q_{1}^{\epsilon/k}\setminus Q_{1})\times (b, r_{\theta}]) .  
	\end{align*}
	On the one hand
    \begin{equation}
	   \lim_{\epsilon \downarrow 0} \frac{m(Q_{1} \times [r_{\theta}, r_{\theta} + \epsilon])}{\epsilon} = m_{\Sigma}(Q_{1}) I_{(K,N,\infty)}(\theta) ,
	\end{equation} 
	\begin{equation}
	\lim_{\epsilon \downarrow 0} \frac{m(Q_{2} \times [\bar{r_{\theta}} - \epsilon, \bar{r_{\theta}}])}{\epsilon} = m_{\Sigma}(Q_{2}) I_{(K,N,\infty)}(\theta).
	\end{equation} 
	Put $c := \frac{1}{m_{K,N}}\int_{b}^{r_{\theta}}\cosh^{N-1}(\sqrt{\sigma}t)dt \geq \frac{k^{N-1}}{m_{K,N}}(r_{\theta}-b)$, we have
	\begin{align*}
	   \liminf_{\epsilon \downarrow 0} \frac{m( (Q_{1}^{\epsilon/k}\setminus Q_{1})\times (b,r_{\theta}])}{\epsilon} &= c\liminf_{\epsilon \downarrow 0} \displaystyle\frac{m_{\Sigma}(Q_{1}^{\epsilon/k}) - m_{\Sigma}(Q_{1})}{\epsilon} \\&= \frac{c}{k} m_{\Sigma}^{+}(Q_{1}) \\ & \geq \frac{c}{k} I_{(K(2-N)/(1-N),N-1,\infty)}(m_{\Sigma}(Q_{1})) > 0.
	\end{align*}
	Hence $m^{+}(A) > I_{(K,N,\infty)}(\theta)$ contradicting the hypothesis. 
	}
	\end {proof}
	
	\begin{remark}
	{
	The isoperimetric inequality can be extended to a quantitative version on Gaussian spaces (see \cite{CFMP} and \cite{MN,El,BBJ} for dimension-free estimates) and metric measure spaces of positive effective dimension (see \cite{CMM}). It was showed that the volume of the symmetric difference between a set and the isoperimetric minimizer is bounded from above by a functional form of the isoperimetric deficit of this set. When $N < 0$, needle decomposition method might be helpful to study a similar estimate.
	}
	\end{remark}

\begin{appendices}
\section{}
In the appendix, we will prove that $I_{K,N,D}(\theta) > I_{K,N,\infty}(\theta)$ for all $\theta \in (0,1)$, $N < 0$ and $D < \infty$. We abbreviate in this appendix $I(\theta) := I_{K,N,\infty}(\theta)$.

\begin{lemma}\label{lema1}
$K_{3,D}(\theta) > I_{K,N,\infty}(\theta)$ for all $\theta \in (0,1)$, $N < 0$ and $D < \infty$.
\end{lemma}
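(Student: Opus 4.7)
The strategy is to compare the two profiles directly by looking at their derivatives in $\theta$, exploiting the fact that the measure $e^{(N-1)\sqrt{\sigma}s}ds$ on $[0,D]$ defining $K_{3,D}$ is an exponential, which makes $K_{3,D}$ affine in $\theta$. Indeed, writing $\mu(s) = e^{(N-1)\sqrt{\sigma}s}$ and using the defining relation $\theta\int_0^D\mu = \int_0^{d_3(\theta)}\mu$, differentiation gives $d_3'(\theta) = \int_0^D\mu/\mu(d_3(\theta))$, hence
\[ K_{3,D}'(\theta) = \frac{\mu'(d_3(\theta))\, d_3'(\theta)}{\int_0^D\mu} = \frac{\mu'(d_3(\theta))}{\mu(d_3(\theta))} = (N-1)\sqrt{\sigma}, \]
a constant. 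So $K_{3,D}$ is an affine function of $\theta$ with negative slope (since $N<1$).

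Next, I would carry out the same computation for $I(\theta) = \rho(c(\theta))/\int_{\mathbb R}\rho$ where $\rho(s) = \cosh^{N-1}(\sqrt{\sigma}s)$, obtaining
\[ I'(\theta) = \frac{\rho'(c(\theta))}{\rho(c(\theta))} = (N-1)\sqrt{\sigma}\tanh\bigl(\sqrt{\sigma}\,c(\theta)\bigr). \]
Subtracting, the key identity becomes
\[ (K_{3,D}-I)'(\theta) = (N-1)\sqrt{\sigma}\Bigl(1 - \tanh\bigl(\sqrt{\sigma}\,c(\theta)\bigr)\Bigr). \]
Because $\tanh<1$ everywhere and $(N-1)\sqrt{\sigma}<0$, this derivative is strictly negative on $(0,1)$. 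Thus the difference $K_{3,D}-I$ is strictly monotonically decreasing on $(0,1)$.

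The final step is to evaluate the endpoint $\theta\to 1^-$. Since $c(\theta)\to\infty$ and $N-1<0$, we have $\rho(c(\theta))\to 0$, so $I(\theta)\to 0$; on the other hand $d_3(1)=D$ gives $K_{3,D}(1) = e^{(N-1)\sqrt{\sigma}D}/\int_0^D e^{(N-1)\sqrt{\sigma}s}\,ds>0$. Therefore $\lim_{\theta\to 1^-}(K_{3,D}-I)(\theta)>0$, and monotonicity forces $K_{3,D}(\theta)-I(\theta)>K_{3,D}(1)>0$ for every $\theta\in(0,1)$, which is the desired strict inequality.

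I do not anticipate a serious obstacle: the argument is entirely one-dimensional and the main ingredient is the lucky observation that the exponential model makes $K_{3,D}$ affine, which reduces the comparison to a one-line derivative check. The only point to be careful about is to verify that the two profiles extend continuously to $\theta=1$ (so that the limit argument is valid) and that $c(\theta)$ and $d_3(\theta)$ are smooth, which is immediate from the implicit definition together with positivity of the density.
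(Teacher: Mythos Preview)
Your argument is correct and is essentially identical to the paper's proof: both compute $K_{3,D}'(\theta)=(N-1)\sqrt{\sigma}$ and $I'(\theta)=(N-1)\sqrt{\sigma}\tanh(\sqrt{\sigma}c(\theta))$, observe that the difference $h=K_{3,D}-I$ has $h'<0$ since $|\tanh|<1$, and conclude $h(\theta)\ge h(1)=e^{(N-1)\sqrt{\sigma}D}/\int_0^D e^{(N-1)\sqrt{\sigma}s}\,ds>0$.
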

\begin{proof}
By the definitions of $c(\theta)$,$d_{3}(\theta)$ in $\S$2.2, we have
	$1 = I(\theta)c'(\theta) = K_{3,D}(\theta)d_{3}'(\theta)$. Therefore
		\[ I'(\theta) = (N-1)\sqrt{\sigma}\tanh(\sqrt{\sigma}c(\theta)),\]
		\[ K_{3,D}'(\theta) = (N-1)\sqrt{\sigma}.\] Put $h(\theta) = K_{3,D}(\theta)-I(\theta)$. Since $|\tanh x| < 1$, we have $h'(\theta) < 0$. Hence $h(\theta) \geq h(1) = \frac{e^{(N-1)\sqrt{\sigma}D}}{\int_{0}^{D} e^{(N-1)\sqrt{\sigma}s}ds} > 0$.
\end{proof}

\begin{lemma}\label{lemma2}
$K_{2,D}(\theta) > I_{K,N,\infty}(\theta)$ for all $\theta \in (0,1)$, $N < 0$ and $D < \infty$.
\end{lemma}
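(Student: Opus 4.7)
The plan is to follow the template of Lemma \ref{lema1}: for each fixed $\xi>0$, define $K_{2,D,\xi}(\theta):=\sinh^{N-1}(\sqrt{\sigma}d_{2,\xi}(\theta))/\int_{\xi}^{\xi+D}\sinh^{N-1}(\sqrt{\sigma}s)\,ds$, show that this stays strictly above $I(\theta):=I_{K,N,\infty}(\theta)$ pointwise in $\xi$, and then promote this to the infimum $K_{2,D}(\theta)=\inf_{\xi>0}K_{2,D,\xi}(\theta)$. Differentiating the defining relations for $c(\theta)$ and $d_{2,\xi}(\theta)$ in the same manner as in Lemma \ref{lema1} yields
\[ I'(\theta)=(N-1)\sqrt{\sigma}\tanh(\sqrt{\sigma}c(\theta)), \qquad K_{2,D,\xi}'(\theta)=(N-1)\sqrt{\sigma}\coth(\sqrt{\sigma}d_{2,\xi}(\theta)). \]
Setting $h_{\xi}(\theta):=K_{2,D,\xi}(\theta)-I(\theta)$, the crucial inequalities $\coth(\sqrt{\sigma}d_{2,\xi}(\theta))>1\geq\tanh(\sqrt{\sigma}c(\theta))$ (valid because $d_{2,\xi}(\theta)\geq\xi>0$) together with $N-1<0$ force $h_{\xi}'(\theta)<0$. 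Thus $h_{\xi}$ is strictly decreasing on $(0,1)$, and its value at $\theta=1$ is $K_{2,D,\xi}(1)=\sinh^{N-1}(\sqrt{\sigma}(\xi+D))/\int_{\xi}^{\xi+D}\sinh^{N-1}(\sqrt{\sigma}s)\,ds>0$, so $h_{\xi}(\theta)>0$ for every $\xi>0$ and $\theta\in(0,1)$.

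To deduce strict inequality after taking the infimum in $\xi$, I will analyze the two boundary limits of $\xi\in(0,\infty)$. As $\xi\to0^{+}$, the expansion $\sinh^{N-1}(\sqrt{\sigma}s)\sim(\sqrt{\sigma}s)^{N-1}$ near $s=0$ together with $N-1<-1$ shows that $\int_{\xi}^{\xi+D}\sinh^{N-1}(\sqrt{\sigma}s)\,ds$ diverges like $-(\sqrt{\sigma})^{N-1}\xi^{N}/N$; solving the defining relation for $d_{2,\xi}(\theta)$ asymptotically gives $d_{2,\xi}(\theta)\sim\xi(1-\theta)^{1/N}$, and a direct calculation produces $K_{2,D,\xi}(\theta)\sim -N(1-\theta)^{(N-1)/N}/\xi\to\infty$. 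As $\xi\to\infty$, factoring $\sinh^{N-1}(\sqrt{\sigma}s)=2^{1-N}e^{(N-1)\sqrt{\sigma}s}(1-e^{-2\sqrt{\sigma}s})^{N-1}$ shows that, after the translation $s\mapsto\xi+t$, the normalized densities converge uniformly on $[0,D]$ to a multiple of $e^{(N-1)\sqrt{\sigma}t}$; this forces $d_{2,\xi}(\theta)-\xi\to d_{3}(\theta)$ and therefore $K_{2,D,\xi}(\theta)\to K_{3,D}(\theta)$, which is strictly greater than $I(\theta)$ by Lemma \ref{lema1}.

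Since $\xi\mapsto K_{2,D,\xi}(\theta)$ is continuous on $(0,\infty)$, bounded below by $I(\theta)$ pointwise, and has boundary limits $+\infty$ and $K_{3,D}(\theta)$ which both exceed $I(\theta)$, any minimizing sequence $\{\xi_{n}\}$ for the infimum admits a subsequence converging in $[0,\infty]$; in each of the three cases (interior limit, $0^{+}$, $\infty$) the limit of $K_{2,D,\xi_{n}}(\theta)$ is strictly greater than $I(\theta)$, yielding $K_{2,D}(\theta)>I_{K,N,\infty}(\theta)$. The main technical obstacle is the large-$\xi$ asymptotic: one must verify that $d_{2,\xi}(\theta)-\xi$ stays uniformly bounded (and converges to $d_{3}(\theta)$) so that dominated convergence applies to both numerator and denominator; the small-$\xi$ regime and the monotonicity step are routine once the derivatives above are in hand.
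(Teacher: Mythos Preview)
Your proof is correct and follows the same overall structure as the paper: compute $I'(\theta)=(N-1)\sqrt{\sigma}\tanh(\sqrt{\sigma}c(\theta))$ and $K_{2,D,\xi}'(\theta)=(N-1)\sqrt{\sigma}\coth(\sqrt{\sigma}d_{2,\xi}(\theta))$, deduce that $h_{\xi}=K_{2,D,\xi}-I$ is strictly decreasing and hence $h_{\xi}(\theta)\ge h_{\xi}(1)>0$ for each fixed $\xi>0$, and then rule out degeneration of the infimum by analyzing the two boundary regimes $\xi\to 0^{+}$ and $\xi\to\infty$.

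The differences are in the boundary computations. For $\xi\to 0^{+}$ the paper introduces the auxiliary quantity $\bar d(\xi)$ coming from the power density $(\sqrt{\sigma}s)^{N-1}$ and proves the explicit comparison $d_{2,\xi}(\theta)\le \bar d(\xi)$ via Cauchy's mean value theorem and the monotonicity of $\sinh(x)/x$; your asymptotic $d_{2,\xi}(\theta)\sim\xi(1-\theta)^{1/N}$ is the infinitesimal version of the same comparison and leads to the same conclusion. For $\xi\to\infty$ the paper simply bounds $h_{\xi}(\theta)\ge h_{\xi}(1)=\varphi(\xi+D)/\int_{\xi}^{\xi+D}\varphi$ and computes this limit directly, whereas you identify the full limit $K_{2,D,\xi}(\theta)\to K_{3,D}(\theta)$ and invoke Lemma~\ref{lema1}. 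Your route is slightly more conceptual (it explains \emph{why} the $\sinh$ model degenerates to the exponential one), while the paper's is more self-contained; both are valid.
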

\begin{proof}

Put $\varphi(t) := \sinh^{N-1}(\sqrt{\sigma}t)$. For $t>0$, we have
\[ \varphi'(t) = (N-1)\sqrt{\sigma}\varphi(t)\coth(\sqrt{\sigma}t) < 0.\] Fix $\xi>0$ and set $g_{\xi}(\theta) := \frac{\varphi(d_{2,\xi}(\theta))}{\int_{\xi}^{\xi + D} \varphi(s)ds} > 0$. By the definitions of $c(\theta)$, $d_{2,\xi}(\theta)$, we have
	$1 = I(\theta)c'(\theta) = g_{\xi}(\theta)d_{2,\xi}'(\theta)$. Therefore
	\[ I'(\theta) = (N-1)\sqrt{\sigma}\tanh(\sqrt{\sigma}c(\theta)),\]
	\[ g_{\xi}'(\theta) = (N-1)\sqrt{\sigma}\coth(\sqrt{\sigma}d_{2,\xi}(\theta)).\] Put $h_{\xi}(\theta) = g_{\xi}(\theta)-I(\theta)$. Since $|\tanh x| < 1$ and $\coth y > 1$ for $y>0$, $h_{\xi}(\theta) \geq h_{\xi}(1)$. Note that $I(1) = 0$, hence $h_{\xi}(\theta) > 0$ for each $\xi > 0$. Fix $\theta$ and put $m(\xi) = \frac{\int_{\xi}^{\xi + D} \varphi(s)ds}{\varphi(\xi+D)} > 0$. Denote $\cosh(\sqrt{\sigma} D)$, $\sinh(\sqrt{\sigma} D)$ by $c_{D}$ and $s_{D}$. We have
\begin{align*}
\lim_{\xi \rightarrow \infty} m(\xi) 
&= \lim_{\xi \rightarrow \infty}  \frac{\varphi(\xi+D)-\varphi(\xi)}{\varphi'(\xi+D)} =   \lim_{\xi \rightarrow \infty}\frac{\varphi(\xi+D)}{\varphi'(\xi+D)} - \lim_{\xi \rightarrow \infty} \frac{\varphi(\xi)}{\varphi(\xi+D)} \frac{\varphi(\xi+D)}{\varphi'(\xi+D)}
\\&= \frac{1}{(N-1)\sqrt{\sigma}}-\frac{1}{(N-1)\sqrt{\sigma}}\lim_{\xi \rightarrow \infty} \frac{\varphi(\xi)}{\varphi(\xi+D)}
\\ &= \frac{1}{(N-1)\sqrt{\sigma}}  - \frac{1}{(N-1)\sqrt{\sigma}}\lim_{\xi \rightarrow \infty} \big( c_{D} + s_{D}\coth(\sqrt{\sigma}\xi)\big)^{1-N} 
\\& = \frac{1}{(N-1)\sqrt{\sigma}} \big(1- (c_{D}+s_{D})^{1-N}\big).
\end{align*} 
Note that $\varphi(d_{2,\xi}(\theta)) \geq \varphi(\xi + D)$, hence 
\[\lim_{\xi \rightarrow \infty} h_{\xi}(\theta) \geq \lim_{\xi \rightarrow \infty} h_{\xi}(1) \geq \lim_{\xi \rightarrow \infty} (m(\xi))^{-1} > 0.\]
    
Now we consider the other direction $\lim_{\xi \rightarrow 0} h_{\xi}(\theta)$. We also define $\bar{d}(\xi)$ as
\[ \theta = \frac{\int_{\xi}^{\bar{d}(\xi)} (\sqrt{\sigma}s)^{N-1}ds}{\int_{\xi}^{\xi+D} (\sqrt{\sigma}s)^{N-1}ds} = \frac{\bar{d}^{N}_{\xi}-\xi^{N}}{(\xi+D)^N-\xi^{N}}.\]
Put $f(\Delta) := \frac{\int_{\xi}^{\xi+\Delta} \varphi(s)ds}{\int_{\xi}^{\xi+\Delta} (\sqrt{\sigma}s)^{N-1}ds} \geq 0$. We have
\begin{align*}
    f'(\Delta) &= f(\Delta)\bigg( \frac{\varphi(\xi+\Delta)}{\int_{\xi}^{\xi+\Delta} \varphi(s)ds} - \frac{(\sqrt{\sigma}(\xi+\Delta))^{N-1}}{\int_{\xi}^{\xi+\Delta} (\sqrt{\sigma}s)^{N-1}ds}\bigg) 
    \\&= \frac{f(\Delta)(\sqrt{\sigma}(\xi+\Delta))^{N-1}}{\int_{\xi}^{\xi+\Delta} \varphi(s)ds}\bigg( \frac{\varphi(\xi+\Delta)}{(\sqrt{\sigma}(\xi+\Delta))^{N-1}} - \frac{\int_{\xi}^{\xi+\Delta} \varphi(s)ds}{\int_{\xi}^{\xi+\Delta} (\sqrt{\sigma}s)^{N-1}ds} \bigg).
\end{align*}
By Cauchy's mean value theorem, there exists $c\in (\xi,\xi+\Delta)$ such that $\frac{\int_{\xi}^{\xi+\Delta} \varphi(s)ds}{\int_{\xi}^{\xi+\Delta} (\sqrt{\sigma}s)^{N-1}ds} = \frac{\varphi(c)}{(\sqrt{\sigma}c)^{N-1}}$. Note that $\frac{\sinh x}{x}$ is a monotone increasing function when $x > 0$, we have $\frac{\varphi(x)}{(\sqrt{\sigma}x)^{N-1}}$ is monotone decreasing. Hence $f'(\Delta) < 0$ and $f$ is monotone decreasing. Therefore

\[ \frac{\int_{\xi}^{\bar{d}(\xi)} (\sqrt{\sigma}s)^{N-1}ds}{\int_{\xi}^{d_{2,\xi}(\theta)} \varphi(s)ds} = \frac{\int_{\xi}^{\xi + D} (\sqrt{\sigma}s)^{N-1}ds}{\int_{\xi}^{\xi +D} \varphi(s)ds} \geq \frac{\int_{\xi}^{d_{2,\xi}(\theta)} (\sqrt{\sigma}s)^{N-1}ds}{\int_{\xi}^{d_{2,\xi}(\theta)} \varphi(s)ds}.\] Thus we can deduce that $d_{2,\xi}(\theta) \leq \bar{d}(\xi) = \big( \theta (\xi+D)^{N} + (1-\theta)\xi^{N}\big)^{1/N}$. Hence 
\begin{align*} 
\lim_{\xi \rightarrow 0_{+}} h_{\xi}(\theta) &\geq \lim_{\xi \rightarrow 0_{+}} \frac{\varphi(\bar{d}(\xi))}{\int_{\xi}^{\xi + D} \varphi(s)ds} - I(\theta) \\&= \lim_{\xi \rightarrow 0_{+}} \frac{(N-1)\sqrt{\sigma}\varphi(\bar{d}(\xi))\coth({\sqrt{\sigma}}\bar{d}(\xi))\bar{d}'(\xi)}{\varphi(\xi+D)-\varphi(\xi)} - I(\theta)\\&= \lim_{\xi \rightarrow 0_{+}} \frac{(N-1)\sqrt{\sigma}^{N}\bar{d}^{N-1}(\xi)\bar{d}'(\xi)}{\sinh({\sqrt{\sigma}}\bar{d}(\xi))\big(\varphi(D)-\varphi(\xi)\big)} 
- I(\theta)\\&= \lim_{\xi \rightarrow 0_{+}} \frac{1}{\sinh({\sqrt{\sigma}}\bar{d}(\xi))}\cdot \frac{(N-1)\sqrt{\sigma}^{N}\big(\theta (\xi+D)^{N-1} + (1-\theta)\xi^{N-1}\big)}{\varphi(D)-\varphi(\xi)} - I(\theta)\\&= \lim_{\xi \rightarrow 0_{+}} \frac{1}{\sinh({\sqrt{\sigma}}\bar{d}(\xi))}\cdot\frac{(1-N)\sqrt{\sigma}\big(\theta (\xi+D)^{N-1} + (1-\theta)\xi^{N-   1}\big)}{\xi^{N-1}}- I(\theta)
\\&= \infty. 
\end{align*}
Therefore $K_{2,D}(\theta) - I(\theta) = \inf_{\xi>0} h_{\xi}(\theta) > 0$.
\end{proof}

\begin{lemma}\label{lema3}
$K_{1,D}(\theta) > I_{K,N,\infty}(\theta)$ for all $\theta \in (0,1)$, $N < 0$ and $D < \infty$.
\end{lemma}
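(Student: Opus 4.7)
The plan is to imitate Lemma \ref{lem:1dim1} pointwise in $\xi$ and then adapt the end-of-proof analysis of Lemma \ref{lemma2} to take care of $|\xi| \to \infty$. I would fix $\xi \in \mathbb{R}$ and set $g_{\xi}(\theta) := \cosh^{N-1}(\sqrt{\sigma}d_{1,\xi}(\theta))/\int_{\xi}^{\xi+D}\cosh^{N-1}(\sqrt{\sigma}s)\,ds$, so that $K_{1,D}(\theta) = \inf_{\xi \in \mathbb{R}}g_{\xi}(\theta)$. Differentiating the defining relations $1 = I(\theta)c'(\theta) = g_{\xi}(\theta)d_{1,\xi}'(\theta)$ yields
\[
I'(\theta) = (N-1)\sqrt{\sigma}\tanh(\sqrt{\sigma}c(\theta)),\qquad g_{\xi}'(\theta) = (N-1)\sqrt{\sigma}\tanh(\sqrt{\sigma}d_{1,\xi}(\theta)),
\]
where $I := I_{K,N,\infty}$.

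Setting $h_{\xi}(\theta) := g_{\xi}(\theta) - I(\theta)$, at any interior critical point of $h_{\xi}$ the injectivity of $\tanh$ forces $d_{1,\xi}(\theta) = c(\theta)$, and therefore
\[
h_{\xi}(\theta) = \cosh^{N-1}(\sqrt{\sigma}c(\theta))\left(\frac{1}{\int_{\xi}^{\xi+D}\cosh^{N-1}(\sqrt{\sigma}s)\,ds} - \frac{1}{\int_{-\infty}^{\infty}\cosh^{N-1}(\sqrt{\sigma}s)\,ds}\right) > 0,
\]
the strict positivity coming from the fact that the first integral is over a proper subinterval. The boundary limits $\lim_{\theta \to 0^{+}}h_{\xi}(\theta)$ and $\lim_{\theta \to 1^{-}}h_{\xi}(\theta)$ are also manifestly positive (they equal the values at the endpoints $\xi$ and $\xi+D$ of the $g_{\xi}$ ratio, while $I \to 0$). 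Hence $h_{\xi}(\theta) > 0$ on $(0,1)$ for each fixed $\xi$.

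The main task, and the expected main obstacle, is to upgrade this pointwise positivity to a strictly positive infimum over $\xi \in \mathbb{R}$, since without compactness of the parameter space the infimum could a priori collapse to zero. By continuity of $\xi \mapsto h_{\xi}(\theta)$ on $\mathbb{R}$, it suffices to control $\liminf_{|\xi| \to \infty}h_{\xi}(\theta)$. Using the uniform asymptotic $\cosh(\sqrt{\sigma}s) \sim \tfrac{1}{2}e^{\sqrt{\sigma}s}$ as $\xi \to +\infty$ on $[\xi,\xi+D]$, both the numerator and denominator of $g_{\xi}(\theta)$ carry a common factor $2^{1-N}$ in the limit, and one obtains $\lim_{\xi \to +\infty}g_{\xi}(\theta) = K_{3,D}(\theta)$. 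The reflection $s \mapsto -s$ yields the symmetry $g_{-\xi-D}(1-\theta) = g_{\xi}(\theta)$, hence $\lim_{\xi \to -\infty}g_{\xi}(\theta) = K_{3,D}(1-\theta)$. Invoking Lemma \ref{lema1} together with the symmetry $I(\theta) = I(1-\theta)$ (from $c(1-\theta) = -c(\theta)$), both limits strictly exceed $I(\theta)$. This gives $\inf_{\xi \in \mathbb{R}}h_{\xi}(\theta) > 0$ and hence $K_{1,D}(\theta) > I_{K,N,\infty}(\theta)$.
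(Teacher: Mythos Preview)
Your argument is correct and follows the same skeleton as the paper's proof: fix $\xi$, compute $I'$ and $g_\xi'$, show $h_\xi := g_\xi - I$ is positive at any interior critical point and at $\theta \to 0^+,1^-$, and then handle the infimum over $\xi$ separately.

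The genuine difference lies in the endgame for $|\xi|\to\infty$. The paper controls $g_\xi(0)$, $g_\xi(1)$ and $h_\xi(\theta_0)$ (at the critical $\theta_0$) individually, computing each limit explicitly via l'H\^opital-type calculations and obtaining concrete constants such as $(1-N)\sqrt{\sigma}/\big((c_D-s_D)^{N-1}-1\big)$. You instead observe that the uniform asymptotic $\cosh(\sqrt{\sigma}s)\sim \tfrac12 e^{\sqrt{\sigma}s}$ on $[\xi,\xi+D]$ forces $d_{1,\xi}(\theta)-\xi\to d_3(\theta)$ and hence $g_\xi(\theta)\to K_{3,D}(\theta)$ as $\xi\to+\infty$; the reflection symmetry $g_{-\xi-D}(1-\theta)=g_\xi(\theta)$ together with $I(1-\theta)=I(\theta)$ handles $\xi\to-\infty$, and Lemma~\ref{lema1} then closes the argument. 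This is a cleaner and more conceptual reduction, reusing Lemma~\ref{lema1} rather than repeating its computations; the paper's route, in contrast, is entirely self-contained and yields explicit numerical lower bounds for $K_{1,D}-I$ in terms of $D$, $N$, $K$.
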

\begin{proof}
Put $\varphi(t) := \cosh^{N-1}(\sqrt{\sigma}t)$. We have
\[ \varphi'(t) = (N-1)\sqrt{\sigma}\varphi(t)\tanh(\sqrt{\sigma}t).\]
Put $g_{\xi}(\theta) :=  \frac{\varphi(d_{1,\xi}(\theta))}{\int_{\xi}^{\xi + D} \varphi(s)ds}$. By the definitions of $c(\theta)$,$d_{1.\xi}(\theta)$ in $\S$2.2, we have
	$1 = I(\theta)c'(\theta) = g_{\xi}(\theta)d_{1,\xi}'(\theta)$. Therefore
	\[ I'(\theta) = (N-1)\sqrt{\sigma}\tanh(\sqrt{\sigma}c(\theta)),\]
	\[ g_{\xi}'(\theta) = (N-1)\sqrt{\sigma}\tanh(\sqrt{\sigma}d_{1,\xi}(\theta)).\] Put $h_{\xi}(\theta) = g_{\xi}(\theta)-I(\theta)$. Since $\tanh(x)$ is monotone increasing we have $h_{\xi}'(\theta_{0}) = 0$ if and only if $c(\theta_{0}) = d_{1,\xi}(\theta_{0})$.  Since $I(0) = I(1) = 0$, we can deduce  $h_{\xi}(\theta) \geq \min\{g_{\xi}(0),g_{\xi}(1), h_{\xi}(\theta_{0})$ where $c(\theta_{0}) = d_{1,\xi}(\theta_{0})\}$.

Put $m_{0}(\xi) := (g_{\xi}(0))^{-1} = \frac{\int_{\xi}^{\xi + D} \varphi(s)ds}{\varphi(\xi)}$. We have

\begin{align*}
    m'_{0}(\xi) &= \frac{\varphi(\xi+D)-\varphi(\xi)}{\varphi(\xi)} - m_{0}(\xi)\frac{\varphi'(\xi)}{\varphi(\xi)} \\ &= m_{0}(\xi) \bigg( \frac{\varphi(\xi+D)-\varphi(\xi)}{\int_{\xi}^{\xi + D} \varphi(s)ds} - (N-1)\sqrt{\sigma}\tanh(\sqrt{\sigma}\xi)\bigg).
\end{align*}
By Cauchy's mean value theorem, there exists $c\in (\xi,\xi+D)$ such that $\frac{\varphi(\xi+D)-\varphi(\xi)}{\int_{\xi}^{\xi + D} \varphi(s)ds} = \frac{\varphi'(c)}{\varphi(c)} = (N-1)\sqrt{\sigma}\tanh(\sqrt{\sigma}c)$. Note that $\tanh x$ is a monotone increasing function, we have $m_{0}(\xi)$ is a monotone decreasing function. Hence $g_{\xi}(0) \geq \lim_{\xi \rightarrow -\infty} \frac{\varphi(\xi)}{\int_{\xi}^{\xi + D} \varphi(s)ds}$. Denote $\cosh(\sqrt{\sigma} D)$, $\sinh(\sqrt{\sigma} D)$ by $c_{D}$ and $s_{D}$.  We have
\begin{align*}
\lim_{\xi \rightarrow -\infty} m_{0}(\xi) 
&= \lim_{\xi \rightarrow -\infty}  \frac{\varphi(\xi+D)-\varphi(\xi)}{\varphi'(\xi)} =  \lim_{\xi \rightarrow -\infty} \frac{\varphi(\xi + D)}{\varphi(\xi)} \frac{\varphi(\xi)}{\varphi'(\xi)} - \lim_{\xi \rightarrow -\infty}\frac{\varphi(\xi)}{\varphi'(\xi)} 
\\&= \frac{-1}{(N-1)\sqrt{\sigma}}\lim_{\xi \rightarrow -\infty} \frac{\varphi(\xi + D)}{\varphi(\xi)} - \frac{-1}{(N-1)\sqrt{\sigma}}
\\ &= \frac{-1}{(N-1)\sqrt{\sigma}}\lim_{\xi \rightarrow -\infty} \big( c_{D} + s_{D}\tanh(\sqrt{\sigma}\xi)\big)^{N-1} - \frac{-1}{(N-1)\sqrt{\sigma}} 
\\& = \frac{-1}{(N-1)\sqrt{\sigma}} \big( (c_{D}-s_{D})^{N-1} - 1 \big).
\end{align*} 
Hence $g_{\xi}(0) \geq \displaystyle\frac{(1-N)(\sqrt{\sigma})}{( (c_{D}-s_{D})^{N-1} - 1 )} > 0$.

On the other hand, put $m_{1}(\xi) := (g(1))^{-1} = \frac{\int_{\xi}^{\xi + D} \varphi(s)ds}{\varphi(\xi+D)}$. We have

\begin{align*}
    m'_{1}(\xi) &= \frac{\varphi(\xi+D)-\varphi(\xi)}{\varphi(\xi+D)} - m_{1}(\xi)\frac{\varphi'(\xi +D)}{\varphi(\xi+D)} \\ &= m_{1}(\xi) \bigg( \frac{\varphi(\xi+D)-\varphi(\xi)}{\int_{\xi}^{\xi + D} \varphi(s)ds} - (N-1)\sqrt{\sigma}\tanh(\sqrt{\sigma}(\xi+D)\bigg).
\end{align*}
By Cauchy's mean value theorem, there exists $c\in (\xi,\xi+D)$ such that $\frac{\varphi(\xi+D)-\varphi(\xi)}{\int_{\xi}^{\xi + D} \varphi(s)ds} = \frac{\varphi'(c)}{\varphi(c)} = (N-1)\sqrt{\sigma}\tanh(\sqrt{\sigma}c)$. Note that $\tanh x$ is a monotone increasing function, we have $m_{1}(\xi)$ is a monotone increasing function. Hence $g_{\xi}(1) \geq \lim_{\xi \rightarrow \infty} \frac{\int_{\xi}^{\xi + D} \varphi(s)ds}{\varphi(\xi+D)}.$ We have
\begin{align*}
\lim_{\xi \rightarrow \infty} m_{1}(\xi) 
&= \lim_{\xi \rightarrow \infty}  \frac{\varphi(\xi+D)-\varphi(\xi)}{\varphi'(\xi+D)} =   \lim_{\xi \rightarrow \infty}\frac{\varphi(\xi+D)}{\varphi'(\xi+D)} - \lim_{\xi \rightarrow \infty} \frac{\varphi(\xi)}{\varphi(\xi+D)} \frac{\varphi(\xi+D)}{\varphi'(\xi+D)}
\\&= \frac{1}{(N-1)\sqrt{\sigma}}-\frac{1}{(N-1)\sqrt{\sigma}}\lim_{\xi \rightarrow \infty} \frac{\varphi(\xi)}{\varphi(\xi+D)}
\\ &= \frac{1}{(N-1)\sqrt{\sigma}}  - \frac{1}{(N-1)\sqrt{\sigma}}\lim_{\xi \rightarrow \infty} \big( c_{D} + s_{D}\tanh(\sqrt{\sigma}\xi)\big)^{1-N} 
\\& = \frac{1}{(N-1)\sqrt{\sigma}} \big(1- (c_{D}+s_{D})^{1-N}\big).
\end{align*} 
Hence $g_{\xi}(1) \geq \displaystyle\frac{(N-1)(\sqrt{\sigma})}{( 1-(c_{D}+s_{D})^{1-N} )} > 0$.

Now we consider the case where there exists $\theta_{0}\in(0,1)$ such that $c(\theta_{0}) = d_{1,\xi}(\theta_{0})\in (\xi,\xi+D)$. Then 
\[ h_{\xi}(\theta_{0}) = \varphi(c(\theta_{0}))\bigg( \frac{1}{\int_{\xi}^{\xi+D}\varphi(s)ds} - \frac{1}{\int_{-\infty}^{\infty}\varphi(s)ds}\bigg) > 0.\] Note that $\varphi(c(\theta)) \geq \min \{ \varphi(\xi),\varphi(\xi +D)\}$. By a similar calculation as above, we have

\begin{align*} \lim_{\xi \rightarrow -\infty} \varphi(\xi+D)\bigg( \frac{1}{\int_{\xi}^{\xi+D}\varphi(s)ds} - \frac{1}{\int_{-\infty}^{\infty}\varphi(s)ds}\bigg) &= \lim_{\xi \rightarrow -\infty} \varphi(\xi)\bigg( \frac{1}{\int_{\xi}^{\xi+D}\varphi(s)ds} - \frac{1}{\int_{-\infty}^{\infty}\varphi(s)ds}\bigg) \\&= \frac{(1-N)(\sqrt{\sigma})}{( (c_{D}-s_{D})^{N-1} - 1 )} > 0
\end{align*} and
\begin{align*} \lim_{\xi \rightarrow \infty} \varphi(\xi)\bigg( \frac{1}{\int_{\xi}^{\xi+D}\varphi(s)ds} - \frac{1}{\int_{-\infty}^{\infty}\varphi(s)ds}\bigg) &= \lim_{\xi \rightarrow \infty} \varphi(\xi+D)\bigg( \frac{1}{\int_{\xi}^{\xi+D}\varphi(s)ds} - \frac{1}{\int_{-\infty}^{\infty}\varphi(s)ds}\bigg) \\&= \frac{(N-1)(\sqrt{\sigma})}{( 1-(c_{D}+s_{D})^{1-N} )} > 0.
\end{align*}
Taking the infimum over $\xi\in(-\infty,\infty)$ shows 
\[K_{1,D}(\theta) - I_{K,N,\infty}(\theta) \geq \inf_{\xi\in\mathbb{R}} \{ g_{\xi}(0), g_{\xi}(1), h_{\xi}(\theta_{0}) \text{ with } c(\theta_{0}) = d_{1,\xi}(\theta_{0}) \}> 0.\]

\end{proof}

\begin{proposition}
 $I_{K,N,D}(\theta) > I_{K,N,\infty}(\theta)$ for all $\theta \in (0,1)$, $N < 0$ and $D < \infty$.
\end{proposition}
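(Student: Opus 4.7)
The plan is to invoke the definition of $I_{(K,N,D)}$ directly and combine the three preceding lemmas of this appendix. Recall from Section 2.2 that for $N<0$ and $D \in (0,\infty)$ the model-space profile is defined by
\[ I_{(K,N,D)}(\theta) = \min\bigl\{K_{1,D}(\theta),\ K_{2,D}(\theta),\ K_{3,D}(\theta)\bigr\}. \]
Lemmas \ref{lema1}, \ref{lemma2}, and \ref{lema3} together state that each of the three quantities $K_{1,D}(\theta)$, $K_{2,D}(\theta)$, $K_{3,D}(\theta)$ strictly exceeds $I_{(K,N,\infty)}(\theta)$ for every $\theta \in (0,1)$, $N<0$, and $D \in (0,\infty)$. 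Since taking a pointwise minimum preserves a common strict lower bound, the conclusion $I_{(K,N,D)}(\theta) > I_{(K,N,\infty)}(\theta)$ follows at once.

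All the analytic content has been absorbed into the three lemmas, so there is no further obstacle at the level of the proposition itself. The delicate steps were the two previous lemmas involving strictly monotone comparison of the profile derivatives (in particular Lemma \ref{lema3} for the $\cosh$-weighted case), which required separate treatment of the boundary limits $\xi \to -\infty$ and $\xi \to +\infty$ together with the interior critical case $c(\theta_0) = d_{1,\xi}(\theta_0)$; once those strictly positive lower bounds on $h_{\xi}(0)$, $h_{\xi}(1)$, and $h_{\xi}(\theta_0)$ are secured, the infimum over $\xi$ remains positive. With the three lemmas in hand the proof of the proposition is essentially a one-line assembly.
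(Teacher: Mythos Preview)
Your proposal is correct and matches the paper's own proof exactly: the paper simply writes ``This is just a corollary of the lemmas above,'' which is precisely the one-line assembly you describe from the definition $I_{(K,N,D)}=\min\{K_{1,D},K_{2,D},K_{3,D}\}$ together with Lemmas~\ref{lema1}, \ref{lemma2}, and \ref{lema3}.
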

\begin{proof}
This is just a corollary of the lemmas above.
\end{proof}

\end{appendices}

\end{document}